\NeedsTeXFormat{LaTeX2e}

 \documentclass[reqno,11pt,a4paper]{amsart}

\usepackage{pgfplots}
\usepackage{subfigure}
\pgfplotsset{compat=1.12} 
\usepackage{listings}

\usepackage{amsfonts,amsmath,amssymb,amsthm}
\usepackage[latin1]{inputenc}
\usepackage{dsfont}
\usepackage{enumerate}
\usepackage{xcolor}
\usepackage[all]{xy}
\def\notshow#1\notshowend{} %
\usepackage{hyphenat}
\usepackage{microtype}


\usepackage{graphicx}

\usepackage{tikz}
\usepackage{tikz-3dplot}
\usepackage{pgfplots}

\usepackage{multirow}
\usepackage{array}

\newcommand{\df}{\mathrm{d}}
\usepackage{amssymb}
\usepackage{amsfonts}
\usepackage{mathrsfs}
\usepackage{hyperref}

\usepackage[normalem]{ulem} 

\def\bb#1\eb{\textcolor{blue}{#1}} 
\def\br#1\er{\textcolor{red}{#1}} %
\def\bm#1\em{\textcolor{purple}{#1}} %

\newcommand{\R}{\mathds R}

\newtheorem{thm}{Theorem}[section]
\newtheorem{prop}[thm]{Proposition}

\theoremstyle{definition}
\newtheorem{defi}[thm]{Definition}

\newtheorem{rem}[thm]{Remark}

\newtheorem*{theorem*}{Proposition}

\numberwithin{equation}{section}

\newcommand{\ben}{\begin{enumerate}}
\newcommand{\een}{\end{enumerate}}
\newcommand{\bit}{\begin{itemize}}
\newcommand{\eit}{\end{itemize}}
\newcommand{\edoc}{\end{document}}


\title[General model for wildfire propagation]{A general model for wildfire propagation with wind and slope}

\author[M. A. Javaloyes]{Miguel \'Angel Javaloyes}\address{Departamento de Matem\'{a}ticas, \hfill\break\indent Universidad de Murcia, \hfill\break\indent Campus de Espinardo,\hfill\break\indent 30100 Espinardo, Murcia, Spain} \email{majava@um.es}

\author[E. Pend\'{a}s-Recondo]{Enrique Pend\'{a}s-Recondo}\address{Departamento de Matem\'{a}ticas, \hfill\break\indent Universidad de Murcia, \hfill\break\indent Campus de Espinardo,\hfill\break\indent 30100 Espinardo, Murcia, Spain \hfill\break\indent \& \hfill\break\indent IMAG (Centro de Excelencia Mar\'{i}a de Maeztu), \hfill\break\indent Universidad de Granada, \hfill\break\indent 18071 Granada, Spain}\email{e.pendasrecondo@um.es}

\author[M. S\'{a}nchez]{Miguel S\'{a}nchez}\address{Departamento de Geometr\'{\i}a y Topolog\'{\i}a, Facultad de Ciencias \& \hfill\break\indent IMAG (Centro de Excelencia Mar\'{i}a de Maeztu), \hfill\break\indent Universidad de Granada, \hfill\break\indent 18071 Granada, Spain}\email{sanchezm@ugr.es}

\subjclass{53C80, 53C22, 53C60}


\begin{document}
\begin{abstract}
A geometric model for the computation of the firefront of a forest wildfire which takes into account several effects (possibly time-dependent wind, anisotropies, and slope of the ground) is introduced. It relies on a general theoretical framework, which reduces the hyperbolic PDE system of any wave to an ODE in a Lorentz-Finsler framework. The wind induces  a sort of  double semi-elliptical fire growth, while the influence of the slope is modeled by means of a term which comes from the Matsumoto metric (i.e., the standard non-reversible Finsler metric that measures the time when going up and down a hill). These contributions make a significant difference from previous models because, now, the infinitesimal wavefronts are not restricted to be elliptical. Even though this is a technical complication, the wavefronts remain computable in real time.
Some simulations of evolution are shown, paying special attention to possible crossovers of the fire.

\vspace{5mm}

\noindent {\em Keywords}: Finsler metrics and spacetimes, wildfire propagation, Matsumoto metric, non-elliptical fire growth.
\end{abstract}
\maketitle


\tableofcontents
\section{Introduction}

Predicting the spread of wildfires is a noteworthy application of mathematical modeling currently used by enviromental agencies worldwide. Examples of widely used fire behaviour simulators are BehavePlus \cite{Andr}, FARSITE \cite{F} (which uses the National Fire-Danger Rating System \cite{CD}), FlamMap \cite{F2} (which includes FARSITE), the Fuel Characteristic Classification System \cite{OSRP}, the KITRAL System \cite{PJ} and Prometheus \cite{TBWTA} (which uses the Canadian Forest Fire Danger Rating System \cite{S}). Among the contributions to the models, we stress:
\begin{itemize}
\item Anderson (1983) \cite{An} proposed an elliptical propagation under constant conditions of wind. More precisely, a sort of {\em double semi-ellipse} is pointed out as the best experimental fitting, even though the elliptical propagation yields a good approximation and subsequent models use it. This means that the velocity of the fire at each point is given by an ellipse, which is chosen in such a way that its major axis is aligned with the wind direction and its length-to-width ratio depends only on the wind speed \cite{Al,An}.

\item Using Huygens' principle, i.e., that each point of the firefront is the source of an independent point-ignited fire, Richards (1990) \cite{R} stated the PDE system for a general infinitesimally elliptical propagation. 

\item These equations provide the fire growth of the main {\em surface fire}, but a variety of secondary effects also occur during a wildfire, such as the {\em crown fire} at the top of the trees and the {\em spotting} (new ignition points ahead of the main firefront) caused by small particles drawn by the wind. Several physical models have been proposed in order to describe each aspect of the fire behavior individually. A selection of these models were incorporated by Finney (1998) \cite{F} into a unified one, leading to the simulator FARSITE.\footnote{Among all the fire growth simulators that have been developed, FARSITE is of greatest interest for us because its approach based on Huygens' principle and Richards' equations is the one we adopt here and in \cite{JPS}. Prometheus also follows the same approach.}

\item Being Richards' equations completely general, some choices have to be made in order to build a concrete model. One of the most important is the position of the ignition point, which is assumed by FARSITE to be one of the foci of the ellipse. Although not clear from an experimental viewpoint, this is an approach that has been applied before and greatly simplifies the construction of the ellipses (see the discussion and references in \cite{Al}).
 

\item More recently, Markvorsen (2012) \cite{M12} introduced the idea of using Finsler geometry for the modeling of wildfires, showing also the importance of the cut points. This Finslerian viewpoint was further developed in \cite{M16} in order to transform  Richards' PDE into an ODE, which is interpreted as the geodesic equations for a Finsler metric of Randers type. He also introduced a rheonomic Lagrangian viewpoint \cite{M17} to include the possible dependence of the wind and the fuel on the time. Further recent developments can be seen in \cite{D}.
\end{itemize} 

Recently, we have introduced a general model for wave propagation that provides a full geometric picture of the evolution of its wavefront \cite{JPS}. This combines a spacetime viewpoint (in the spirit of the modeling of light propagation at a finite speed in General Relativity; see e.g. \cite{BLV05,Gib,JS20}) with  Markvorsen's one \cite{M16} on the use of Finsler geometry to model anisotropies in the speed of propagation. The fastest propagation curves appear as the lightlike geodesics for a Lorentz-Finsler metric, which obey a simple ODE system.
Here we will apply this approach to forest wildfires (focusing only on the surface fire), obtaining an efficient computation of their firefronts in terms of these geodesics. Moreover, the cut points of such geodesics allow one to determine the crossovers of the fire, which are especially important as they may become a danger for firefighters.

The anisotropies we will consider go beyond the elliptical ones in previous approaches. Technically, the assumption that the infinitesimal wavefronts are ellipses (thus, determined by the position of their centers and the orientation and length of their axes) can be regarded as a first quadratic approximation of the model. This leads to modeling the velocity of the fire by terms of a (possibly $ t $-dependent) Randers metric, whose indicatrices are (non-centered) ellipses. However, this approximation might lead to a non-realistic evolution over time. Our aim here will be to build a more accurate model: the effect of the wind will produce a pattern similar to a double semi-ellipse, while the effect of the slope will be described by using a Matsumoto-type metric. This is a classical Finsler metric that allows one to characterize the fastest trajectories to go up and down a mountain or a hill \cite{SS}. In the case of wildfires, the effect of the metric is reversed, as the upward propagation is faster than the downward one. Besides, as we will see, the elliptical approximation is not necessary from a computational viewpoint. Indeed, the ODE system satisfied by our non-quadratic model becomes relatively simple for a big range of possibilities and it can be solved in a short time of computation.

The paper is structured as follows. In \S~\ref{sec:general_setting}, the usual definitions regarding Minkowski norms and Finsler metrics are introduced, along with the mathematical setting upon which we will construct our model. Finsler metrics enable us to model the anisotropic propagation of a wave. After taking aerial coordinates, this propagation takes place on a space $ N\subset  \R^2 $, but in order to properly describe a possible time-dependence we will work in a spacetime $ M=\R \times N $, in the spirit of our approach for waves \cite{JPS}. 

In \S~\ref{sec:matsumoto} we explicitly construct the model. The goal is to obtain a Finsler metric whose indicatrix coincides with the infinitesimal propagation (unit-time propagation assuming constant conditions) of the real wildfire. The construction of such a metric is based on very simple assumptions. As it is purely theoretical,  empirical data will be needed to check its validity, improve the initial assumptions and incorporate additional effects. Anyway, it can serve as a good starting point to implement this kind of geometric models to realistic cases. First, we consider in \S~\ref{subsec:no_wind} the case without wind, only taking into account the effect of the slope. The model relies on three terms: the first two are angle-independent contributions (i.e., the same in all directions) and provide the wildfire propagation if there was no slope, while the last one implements the contribution of the slope in terms of the angle between the vertical directon and the sloped ground. In this way, one arrives at a Matsumoto-type metric, thus providing a link with the distance covered by a walker on a slope (Rem.~\ref{rem:matsumoto}). Next, in \S~\ref{subsec:wind} the wind is included and the first spherical (angle-independent) term in the previous model now becomes elliptical. This generates, along with the second spherical term, a shape that resembles that of a double semi-ellipse aligned with the wind. Finally, the slope contribution remains the same. Under certain simple conditions, this becomes a Finsler metric and the corresponding geodesic equations can be stated easily. 

In \S~\ref{sec:calculation} we summarize and adapt the main results developed in \cite{JPS}. First, we recall in \S~\ref{ss_spacetime_viewpoint} the spacetime framework and define the {\em firemap}, which tracks the wildfire propagation. Then we derive in \S~\ref{ss_ode} the ODE system for lightlike geodesics that provide the firefront at each instant of time in our model (Thm.~\ref{th:pde_ode}); the alternative PDE system (equivalent to Richards' equations if the propagation was elliptical) is also obtained for comparison.

Finally, \S~\ref{sec:examples} shows some examples of our model, obtained by computationally solving the aforementioned ODE system for the firefront. Three basic features are shown:

\begin{itemize}
\item[\S~\ref{subsec:slope_wind}]
{\em Effect of the slope vs wind}. Usually, 
the effect of the slope is simplified and assumed to be of the same nature as that of the wind, so that both yield the same kind of changes in the wildfire propagation (see, e.g., \cite{F}, which defines a ``virtual wind'' that by itself produces the combined effect of both phenomena). This is not clear from an experimental viewpoint and, in our model, both effects have a qualitative difference.

\item[\S~\ref{subsec:cut_points}] {\em Cut points and fire crossovers}. 
Computationally speaking, cut points represent a problem from the PDE viewpoint. Indeed, overlaps of burned regions start to appear after the first cut point. Beyond it, the PDE system may not provide any solution, or if it does, then it no longer represents the actual firefront, as these equations do not intrinsically distinguish burned from unburned areas. Thus, the total burned area and its perimeter must be corrected in order to properly continue the computation. Several algorithms have been proposed (see \cite{F} and references therein), but the process is usually expensive in time and computing power. 

However, our example therein illustrates that this problem is greatly simplified when solving the ODE system. Indeed, instead of the firefront, what we obtain now are the trajectories of the fire at each time. Those entering an already burned region provide directly a cut point and they can be simply removed with no harm to the computation of the others. This is a major goal of the ODE viewpoint, because of the extreme importance of crossovers for firefighters. 

\item[\S~\ref{subsec:real_wildfires}] {\em Flexibility}. Our model can be adapted to a wide range of cases. An example of time-dependent wind and a spatial variation in the fuel conditions is shown, along with another case with a more complex topography.
\end{itemize}

Lastly, we discuss in Appendix~\ref{appendix} two technical issues. The first one in Appendix~\ref{appendix1} is a characterization of the cut points for lightlike geodesics. This is an important issue, because its real-time detection during wildfires is crucial. Indeed, regions between intersecting trajectories may be completely surrounded by the fire and, therefore, may become extremely dangerous when attempting to extinguish the fire. The second one in Appendix~\ref{appendix2} is a discussion on the basic assumption of strong convexity for Finsler metrics. When the metric fails to be strongly convex, geodesics cannot be obtained directly in some directions, so some modifications are proposed in order to avoid this problem.

\section{General setting}
\label{sec:general_setting}
Minkowski norms are obtained from classical norms by relaxing the symmetry of the latter with respect to origin but strenghtening their regularity and non-degeneracy. Then, Finsler metrics are obtained from classical Riemannian ones by replacing the pointwise scalar products of the latter by Minkowski norms. More precisely:

\begin{defi}
\label{def:finsler}
Let $ N $ be a smooth (namely, $C^\infty$) manifold and $ V $ a real vector space of finite dimension. 
\begin{enumerate}[(i)]
\item A {\em Minkowski norm} on $ V $ is a continuous non-negative function $ F: V \rightarrow [0,+\infty) $ such that
\begin{enumerate}
\item $ F $ is smooth and positive on $ V \setminus \{0\} $,
\item $ F $ is positive one-homogeneous, i.e., $ F(\lambda v)=\lambda F(v) $ for all $ \lambda>0, v\in V $, and
\item for every $ v\in V\setminus\{0\} $, the {\em fundamental tensor} $ g_v^F $, defined as
\begin{equation}
\label{eq:fund_tensor}
g_v^F(u,w) := \left. \frac{1}{2}\frac{\partial^2}{\partial\delta\partial\eta}F(v+\delta u+\eta w)^2 \right\rvert_{\delta=\eta=0}, \quad \forall u,w \in V,
\end{equation}
is positive definite.\footnote{This is equivalent to saying that the indicatrix $ \Sigma $ (unit vectors) of $ F $ is strongly convex, i.e., the second fundamental form of $ \Sigma $ as an affine hypersurface of $ V $ with respect to some, and then any, transverse vector pointing into the interior of $ \Sigma $ is positive definite (see \cite[Prop. 2.3]{JS14}).}
\end{enumerate}

\item A {\em Finsler metric} on $ N $ is a continuous non-negative function $ F: TN \rightarrow [0,\infty) $, where $ TN $ is the tangent bundle of $ N $, such that it is smooth away from the zero section and each $ F_p:=F|_{T_pN} $ is a Minkowski norm, for all $ p\in N $. This definition can be naturally extended to any subbundle of $ TN $: the Finsler metric in this case becomes a smooth distribution of Minkowski norms in each fibre of the vector bundle.

\item We say that $ v \in T_pN $ is {\em $ F $-orthogonal} to $ u\in T_pN $, denoted $ v \bot_F u $, if
\begin{equation*}
g_v^F(v,u) = \left. \frac{1}{2}\frac{\partial}{\partial\delta}F(v+\delta u)^2 \right\rvert_{\delta=0}=0.
\end{equation*}
\end{enumerate}
\end{defi}

Observe that the fundamental tensor \eqref{eq:fund_tensor} can be seen as a scalar product for each direction $ v\in T_pN $. This feature will allow us to model the anisotropic propagation of a wildfire, as detailed below. 

Let $ \hat N \subset \R^3 $ be the surface where the wildfire takes place, $ (x,y,z) $ the natural coordinate functions on $ \R^3 $ and $ N\subset\R^2 $ the projection of $ \hat N $ on the $ xy $-plane, i.e., $ N $ is what one would see from an aerial view. In other words, we are selecting a (global) coordinate chart $ (\hat N,\hat z^{-1}) $, where $ \hat{z} $ is a graph:
\begin{equation*}
\begin{array}{cccc}
\hat{z} \colon & N \subset \R^2 & \longrightarrow & \hat N \subset \R^3\\
& (x,y) & \longmapsto & \hat{z}(x,y) := (x,y,z(x,y)).
\end{array}
\end{equation*}
These coordinates $ (x,y) $ will be called the {\em aerial coordinates} of $ \hat N $. In the model we will mainly work in these coordinates, passing all the information from the actual surface $ \hat N $ to $ N $ via $ \hat z $ in order to compute the wildfire propagation more easily, and then back to $ \hat N $ to see the actual spread.

In order to include the (non-relativistic) time $ t $ in the model, we define the {\em spacetime} $ M:=\R\times N $, being $ t: M \rightarrow \R $ the natural projection. At each point $ (t,p) \in M $, the fire spreads over $ N $ in every direction, although in general we will allow the velocity to vary from one direction to another. This way, the space of velocities of the fire at $ (t,p) \in M $ will be given by a closed curve (diffeomorphic to a sphere) $ \Sigma_{(t,p)} $ on the vector space $ \text{Ker}(dt_{(t,p)})$.\footnote{Note that $ \text{Ker}(dt_{(t,p)}) = T_{(t,p)}(\lbrace t \rbrace \times N) \equiv T_pN $, i.e., $ \text{Ker}(dt) $ is composed of copies of $ T_pN $ at different times.}
This means that a vector $ v \in \textup{Ker}(dt_{(t,p)}) $ represents the velocity of the firefront in the direction defined by $ v \in T_pN $ at the time $ t \in \R $ if and only if $ v \in \Sigma_{(t,p)} $ (see Fig.~\ref{fig:surface}).

\begin{figure}
\centering
\includegraphics[width=1\textwidth]{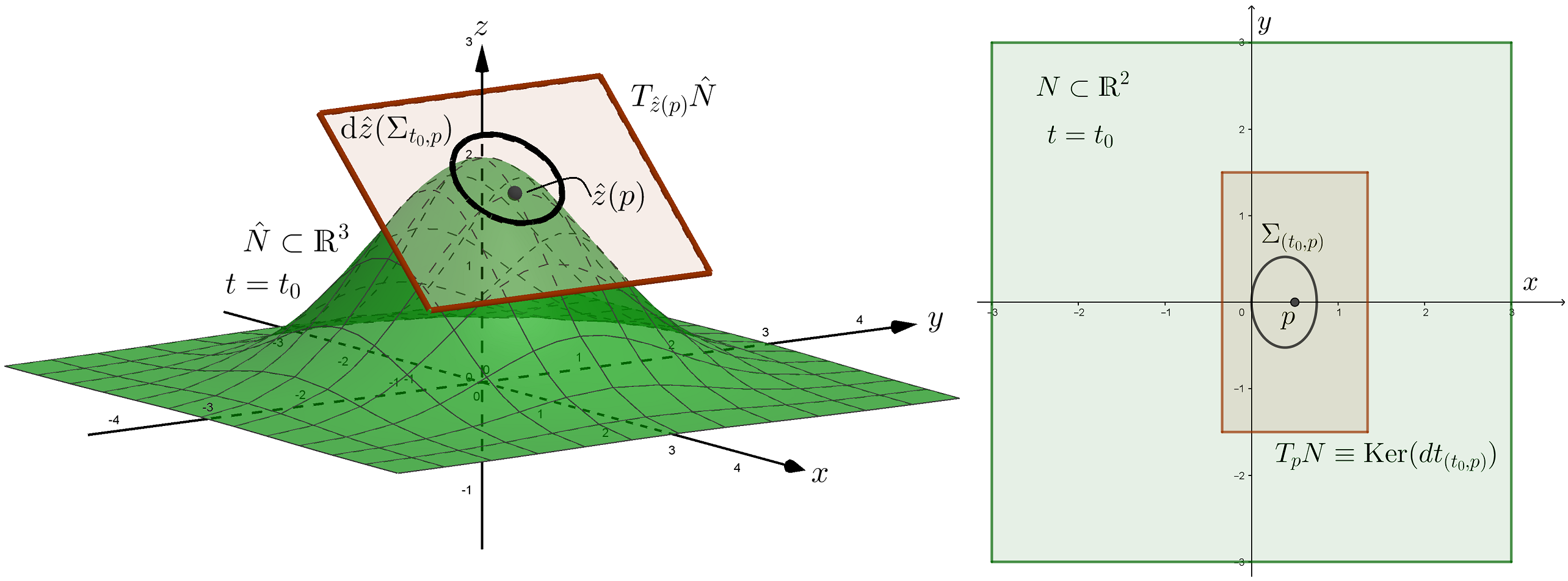}
\caption{The wildfire takes place on a surface $ \hat N \subset \R^3 $, although we will work in $ N \subset \R^2 $ through the aerial coordinates and then recover the actual information via $ \hat z: N\rightarrow \hat N $ and $ \df \hat z_p:T_pN \rightarrow T_{\hat z(p)}\hat N $. The indicatrix $ \Sigma_{(t_0,p)} $ provides the velocity of the fire (or more accurately, the projection of the actual velocity) for each direction at $ (t_0,p) \in M =\R\times N $. In the left image, the surface is given by $ z(x,y)=2\exp(-x^2/2-y^2/2) $ and the indicatrix is computed using the model developed in \S~\ref{subsec:no_wind}.}
\label{fig:surface}
\end{figure}

We can assume that $ \Sigma_{(t,p)} $ is strongly convex (an oval) and encloses the zero vector, which  are natural conditions when dealing with the propagation of waves in general. Then $ \Sigma_{(t,p)} $ is the indicatrix (i.e., the unit vectors) of a Minkowski norm $ F_{(t,p)} $ on $ \textup{Ker}(dt_{(t,p)}) $ (see \cite[Thm. 2.14]{JS14}). Conversely, the indicatrix $ \Sigma $ of any Finsler metric $ F $ on the vector bundle $ \text{Ker}(dt) \subset TM $ (or, equivalently, a time-dependent Finsler metric on $ TN $) provides a field of such ovals $ \Sigma_{(t,p)} $ varying smoothly with $ (t,p) \in M $, and therefore they can be regarded as the space of initial velocities of a wildfire ignited at the point corresponding to the origin of the tangent space. 

In general, we will assume that the wildfire starts from a simple closed curve $ S_0 $ in $ \{t=0\} $,\footnote{Throughout this work, we use the notation $ \{t=t_0\}:=\{t_0\}\times N $.} which is the boundary of the initial burned area $ B_0 \subset M $. Then, given $ S_0 $ and $ F $, i.e., given the initial firefront and the velocity of the fire varying in space, direction and time, \cite{JPS} develops the ODE system that allows one to calculate the firefront at each instant of time (see also \cite{M16,M17}). So, our aim in this work is to find the metric $ F $ that best matches the velocities of the fire.

\begin{rem}
\label{rem:infinit_fire}
In practice, the choice of the oval $ \Sigma_{(t,p)} $ at each point will depend on the amount of solid and gaseous fuel, the slope of the surface and meteorological conditions such as the wind, the temperature, the amount of oxygen in the air, etc. (see, e.g., \cite{AFS,Rot}). In order for the model to be as realistic as possible, we must allow every parameter to vary in space and time, i.e., they may depend on $ p $ and $ t $. At the end, the indicatrix $ \Sigma $ of $ F $ should precisely model the infinitesimal firefront, i.e., $ \Sigma_{(t,p)} $ is what one would obtain after one time unit if the fire was spreading over a surface where every parameter (slope, wind, etc.) is constant.
\end{rem}

\section{Non-elliptical Finslerian model for forest wildfires}
\label{sec:matsumoto}
The simplest analytical anisotropic approximation to model the spread of the fire is to assume that the space of velocities $ \Sigma $ is an ellipse at each point. Anderson \cite{An} was the first to check the validity of this approximation for wind-driven wildfires and thenceforth, elliptical propagation has been commonly assumed in fire growth models and even extended to the case when the wildfire takes place on a sloped terrain (see, e.g., \cite{F}).

In this section we present a more sophisticated model for the propagation of the fire, which may serve as a starting naive modelization for an even more complicated one based on experimental data. Among other improvements, we stress that this model removes the quadratic restriction (i.e., elliptical propagation), with the effect of the slope and the wind being qualitatively different. Specifically: (a) with slope, the contribution to the otherwise spherical propagation is purely geometrical, obtaining a (reverse) Matsumoto metric, and (b) with wind, the infinitesimal propagation takes the form of a double semi-ellipse, which seems to be the most realistic representation from an experimental viewpoint \cite{An}.

\subsection{Simple model with slope}
\label{subsec:no_wind}
First we describe the case when there is no wind. Recall from \S~\ref{sec:general_setting} that the wildfire spreads over a surface $ \hat N \subset \R^3 $, which is the image of a graph $ \hat{z}(x,y) = (x,y,z(x,y)) $, with $ (x,y) \in N \subset \R^2 $. We will denote by $ \langle \cdot,\cdot \rangle $ and $ || \cdot || $ the Euclidean metric in $ \R^3 $ and its corresponding norm, respectively. Let us study the speed of the fire $ s_{fire} $ as a positive function on $ \R \times N \times \mathds{S}^1 $. Namely, given a time $ t \in \R $, a point $ p = (x,y) \in N $ and an (oriented) direction $ \theta \in [0,2\pi) \cong \mathds{S}^1 $ at $p$, we propose the following speed of the fire:
\begin{equation}
\label{eq:mod_no_wind}
s_{fire}(t,p,\theta) = a(t,p) + h(t,p)[1+\cos(\delta(p,\theta))].
\end{equation}
Let us detail the physical meaning of each term of the equation:
\begin{itemize}
\item $ a(t,p) $ and $ h(t,p) $ are positive real functions on $ M=\R\times N $ that represent different (although possibly related) fire contributions: $ h(t,p) $ refers essentially to that of the fire flame, while the sum $ a(t,p)+h(t,p) $ must coincide with the total speed of the fire when it spreads over a plane without slope ($ \delta = \pi/2 $). Fixing an instant of time, both functions can vary due to the change of vegetation and soil conditions from one point to another; fixing a point, their variation depend on that of the temperature, moisture, rain, etc. Therefore, $ a(t,p) $ and $ h(t,p) $ may vary over space and time, reflecting non-homogeneous and time-dependent fuel and meteorological conditions.

\item $ \delta(p,\theta) \in (0,\pi) $ is the (vertical) {\em slope angle}, that is, the angle in $\R^3$ between the $z$-axis and the surface $\hat N$ in the  chosen aerial direction $ \theta $, 
i.e.,
\begin{equation}
\label{eq:delta}
\cos(\delta(p,\theta)) = \frac{\langle \df\hat z_p(\cos\theta,\sin\theta),\partial_z|_p \rangle}{||\df\hat z_p(\cos\theta,\sin\theta)||}.
\end{equation}
This way, the term $ h\cos\delta $ is the projection of the ``flame" on the tangent space to the surface and represents the contribution of the slope to the fire spread. Physically, the fire moves faster upwards than downwards, since the (vertical) flame is closer to the ground in the upward direction. Here we model this fact through the vertical vector $ h\partial_z $, whose contribution is greater moving upwards, i.e., when $ \delta $ is smaller. Clearly,  $ \delta $ does not depend on $t$ as neither does the slope.
\end{itemize}

Note that the term $ 1+\cos\delta $ ensures that the contribution of the flame is always positive and therefore, so is the speed of the fire for every direction, no matter the conditions.

\subsubsection{Emergence of the Matsumoto metric}
Given any $ \theta \in [0,2\pi) $, $ u_{\theta}|_p $ will denote the corresponding unit vector in $ T_{\hat z(p)}\hat N $, i.e.,
\begin{equation}
\label{eq:u}
\begin{split}
u_{\theta}|_p := \ & \frac{\df\hat z_p(\cos\theta,\sin\theta)}{||\df\hat z_p(\cos\theta,\sin\theta)||} = \frac{(\cos\theta,\sin\theta,\df z_p(\cos\theta,\sin\theta))}{\sqrt{1+\df z_p(\cos\theta,\sin\theta)^2}} = \\
= \ & \frac{(\cos\theta,\sin\theta,\cos\theta\partial_xz|_p+\sin\theta\partial_yz|_p)}{\sqrt{1+(\cos\theta\partial_xz|_p+\sin\theta\partial_yz|_p)^2}},
\end{split}
\end{equation}
where we have used  that, for any $ v = v_1\partial_x|_p + v_2 \partial_y|_p = (v_1,v_2) \in T_pN $,
\begin{equation*}
\begin{split}
& \df \hat{z}_p(v) = (v_1,v_2,\df z_p(v)), \\
& \df z_p(v) = v_1\frac{\partial z}{\partial x}(p)+v_2\frac{\partial z}{\partial y}(p) = v_1\partial_xz|_p+v_2\partial_yz|_p.
\end{split}
\end{equation*}
This way, $ v_{fire}(t,p,\theta):= s_{fire}(t,p,\theta)u_{\theta}|_p \in T_{\hat z(p)}\hat N $ represents the actual velocity of the fire (as a vector) at $ (t,p) $ in the direction $ \theta $.
Conversely, given any $ v=(v_1,v_2) \in T_pN $, $ \theta_v $ will denote its angular direction, determined by $ \tan\theta_v = v_2/v_1 $.

Using \eqref{eq:delta} and the notation above, \eqref{eq:mod_no_wind} becomes
\begin{equation*}
s_{fire}(t,p,\theta) = a(t,p) + h(t,p) (1+\langle u_\theta|_p,\partial_z|_p \rangle),
\end{equation*}
or, multiplying by $ s_{fire} $ on both sides:
\begin{equation}\label{eq:no_delta}
\begin{split}
s_{fire}(t,p,\theta)^2 = \ & a(t,p)s_{fire}(t,p,\theta) +\\
& h(t,p)(s_{fire}(t,p,\theta)+\langle s_{fire}(t,p,\theta) u_\theta|_p,\partial_z|_p \rangle).
\end{split}
\end{equation}

Our aim is to construct a Finsler metric $ F $ on $ \textup{Ker}(dt) $ (or equivalently, on $ TN $ but time-dependent) whose unit vectors represent the aerial velocities of the fire, that is,
\begin{equation*}
\begin{split}
F_{(t,p)}(v) = 1 & \Leftrightarrow \df \hat{z}_p(v) = s_{fire}(t,p,\theta_v) u_{\theta_v}|_p \Leftrightarrow ||\df \hat{z}_p(v)|| = s_{fire}(t,p,\theta_v) \Leftrightarrow \\
\Leftrightarrow \alpha_p(v)^2 & = a(t,p)\alpha_p(v) + h(t,p) (\alpha_p(v)+\langle \df\hat z_p(v),\partial_z|_p \rangle) =\\ 
& = a(t,p)\alpha_p(v) + h(t,p)(\alpha_p(v)+\beta_p(v)),
\end{split}
\end{equation*}
where in the last equivalence we have used \eqref{eq:no_delta} and the notation
\begin{equation}
\label{eq:alpha_beta}
\begin{split}
& \beta_p(v) := \df z_p(v) = v_1\partial_xz|_p + v_2\partial_yz|_p, \\
& \alpha_p(v) := ||\df \hat{z}_p(v)|| = \sqrt{v_1^2+v_2^2+\beta_p(v)^2},
\end{split}
\end{equation}
for any $ v = (v_1,v_2) \in T_pN $.

\begin{figure}
\centering
\includegraphics[width=1\textwidth]{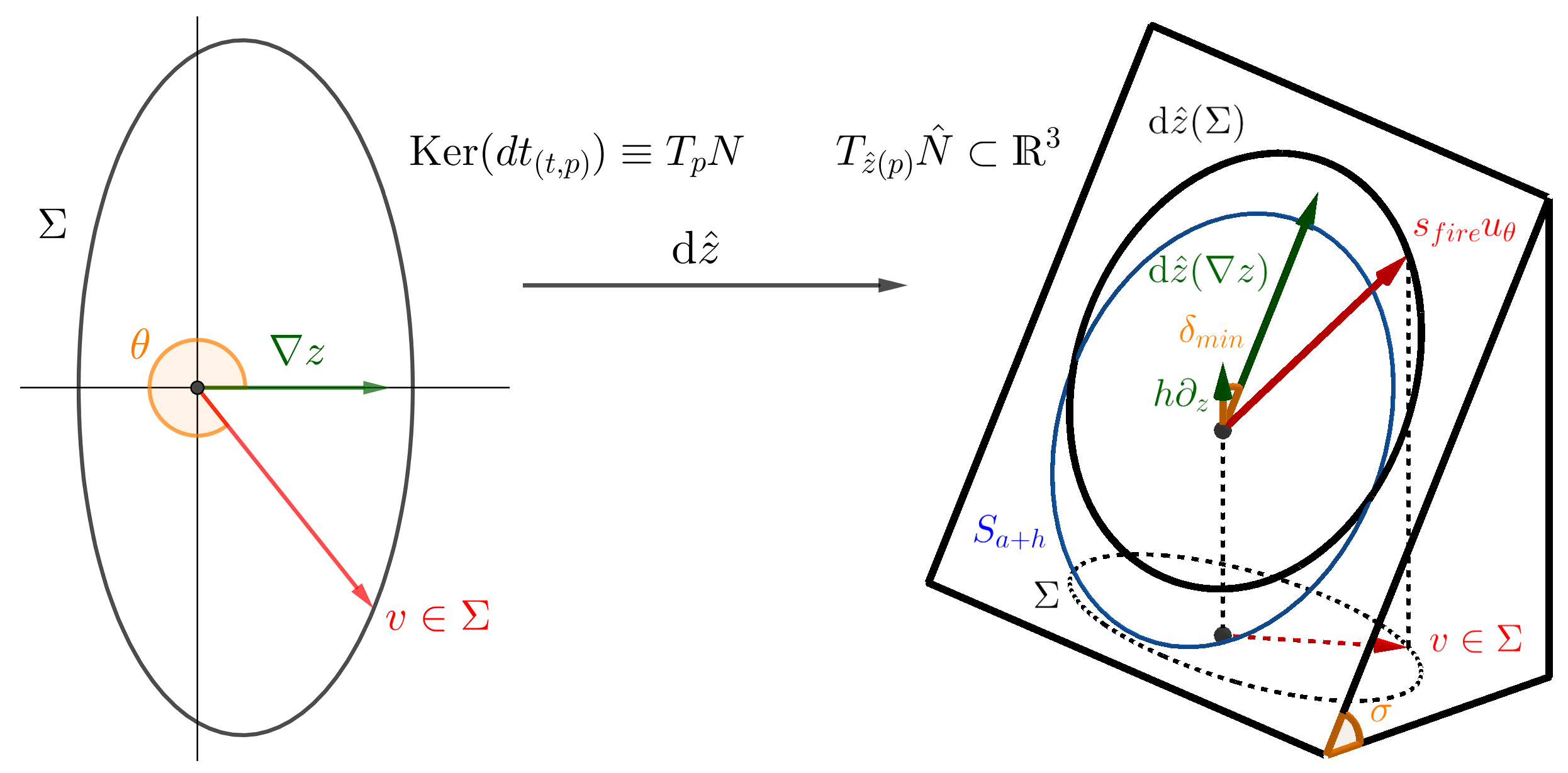}
\caption{For each direction $ \theta $, $ \Sigma $ provides the aerial velocity of the fire $ v \in \Sigma $, being the actual velocity $ \df \hat z(v)=s_{fire}u_{\theta} \in \df \hat z(\Sigma) $. The sphere $ S_{a+h} $ of radius $ a+h $ would be the indicatrix without slope, and it is depicted here in order to appreciate how the contribution of the slope shifts it. The vector $ \df \hat{z}(\nabla z) $ points to the direction of maximum slope or, equivalently, minimum $ \delta $, so that $ \sigma = \frac{\pi}{2}-\delta_{min} $, being $ \sigma $ the angle of inclination. Data: $ \partial_xz = \sqrt{3} $, $ \partial_yz = 0 $ (so that $ \sigma = \frac{\pi}{3} $), $ a = 2 $ and $ h = 1 $.}
\label{fig:slope}
\end{figure}

Therefore, the indicatrix $ \Sigma $ of $ F $ at a time $ t \in \R $ and a point $ p \in N $ must be (see Fig.~\ref{fig:slope})
\begin{equation}
\label{eq:indx_matsumoto}
\Sigma_{(t,p)} = \lbrace v \in \textup{Ker}(dt_{(t,p)}): Q_{(t,p)}(v) = 0 \rbrace,
\end{equation}
where $Q_{(t,p)}(v) := \alpha_p(v)^2-a(t,p)\alpha_p(v)-h(t,p)(\alpha_p(v)+\beta_p(v))$. Applying Okubo's technique (see, e.g., \cite[Exercise 1.2.8(a)]{BCS}), the candidate Minkowski norm $ F_{(t,p)} $ we are looking for is characterized by the equation $ Q_{(t,p)}(\frac{v}{F_{(t,p)}(v)}) = 0 $. Solving for $ F_{(t,p)}(v) $ we obtain
\begin{equation}
\label{eq:F_p_no_wind}
F_{(t,p)}(v) = \frac{\alpha_p(v)^2}{a(t,p)\alpha_p(v)+h(t,p)(\alpha_p(v)+\beta_p(v))},
\end{equation}
for all  $v \in \textup{Ker}(dt_{(t,p)})\setminus\{0\}$. Fig.~\ref{fig:indicatrices} depicts the indicatrices of $ F $ in a specific example.

\begin{figure}
\centering
\includegraphics[width=0.8\textwidth]{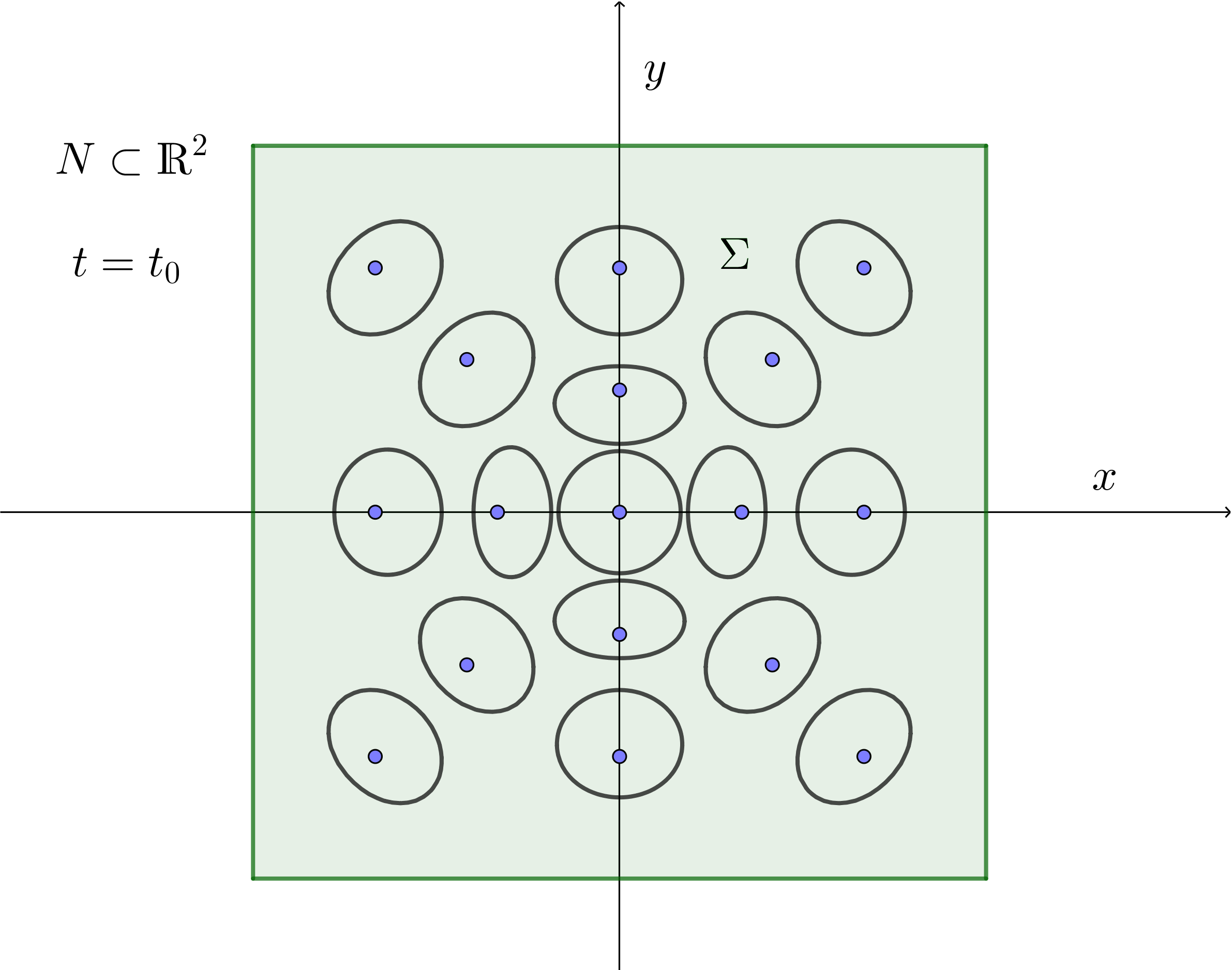}
\caption{Indicatrices of the Matsumoto metric given by \eqref{eq:F_p_no_wind} to model the wildfire spread on the surface of Fig.~\ref{fig:surface}. Data: $ a=h=1/4 $.}
\label{fig:indicatrices}
\end{figure}

\begin{rem}
\label{rem:matsumoto}
For each $t$, the formula  \eqref{eq:F_p_no_wind} agrees with the expression of a 
{\em Matsumoto metric} on $N$. Indeed,  Matsumoto metrics 
are of the form\footnote{Matsumoto metrics may also refer just to the normalized ones, type
$
F(v) = \frac{\alpha(v)^2}{\alpha(v)-\beta(v)}.
$}
$$
\hat F(v) =  \frac{\alpha(v)^2}{b \; \alpha(v)-c \; \beta(v)},
$$
being $ \alpha $ the norm of a Riemannian metric, $\beta $ a one-form and $b,c$ any positive real functions on $N$. Such a metric effectively measures the walking distance when there is a slope. Specifically, if we put $ c:=g/2 $, where $ g $ is the gravity of Earth, solving $ \hat F(v)=1 $ for $ \alpha(v) $ returns the distance covered after one time unit by a person walking with initial speed $ b $ in the direction $ v $ on a plane with an angle of inclination $ \sigma $ given by \eqref{eq:sigma} below (see \cite{SS}).

In our case, if we identify $ b $ with $ a+h $, $ c $ with $ h $ and keep $ \alpha,\beta $ as in \eqref{eq:alpha_beta}, the only difference between the Finsler metric $ F $ in \eqref{eq:F_p_no_wind} and $ \hat F $ is the ``$ \pm $'' sign in front of $ \beta $. The ``$ - $'' sign in the expression of $ \hat F $ means that the walking distance is greater going downwards. On the contrary, we have found a ``$ + $'' sign in $ F $ because the fire moves faster in the upward direction.

Summing up, our model for the fire spread over a surface is analogous to that of a person walking on the same surface, but exchanging the upward and downward directions. From this point of view, $ a+h $ plays the role of the initial speed $ b $, while $ h $ (resp. $ g $) represents the reason why it is more effective to go upwards (resp. downwards).

\end{rem}

\subsubsection{Finslerian restriction}
In order to ensure that a Matsumoto metric is a Finsler one, a restriction must be imposed. Otherwise the indicatrix might be non-convex at some points. In our framework, this condition is just a bound on the slope (see \eqref{eq:cond_no_wind} below), and a detailed proof is carried out for the convenience of the reader. Such a condition will be scarcely restrictive; however, in Appendix~\ref{appendix2} we discuss the appropriate modifications may the condition not be met.
 
\begin{defi}
\label{def:slant}
The {\em slant function} $ \sigma $ is the positive real function on $ N $ such that, for each $ p = (x,y) \in N $, $ \sigma(p) $ is the angle of inclination, i.e., the angle between the tangent plane $ T_{\hat{z}(p)}\hat N $ and the horizontal plane $ z = 0 $ (see Fig.~\ref{fig:slope}). It is given by
\begin{equation}
\label{eq:sigma}
\cos(\sigma(p)) = \frac{1}{\sqrt{1+(\partial_xz|_p)^2+(\partial_yz|_p)^2}}.
\end{equation}
\end{defi}

\begin{rem}
\label{rem:delta_no_wind}
The slant function $ \sigma $ is related to $\delta$. Indeed, observe that
\begin{equation*}
\sigma(p) = \frac{\pi}{2}-\delta_{min}(p),
\end{equation*}
where $ \delta_{min}(p) := \min \lbrace \delta(p,\theta_v): v \in T_pN \rbrace = \delta(p,\theta_{\nabla z}) $, with $ \nabla z = (\partial_x z,\partial_y z) $, i.e., $ \delta $ reaches its mininum in the direction of maximum slope $ \df \hat{z}(\nabla z) $; see Fig.~\ref{fig:slope}. Conversely, we obtain the maximum $ \delta $ in the opposite direction $ \df \hat{z}(-\nabla z) $, namely, $ \delta_{max}(p)=\delta(p,\theta_{-\nabla z})=\pi-\delta_{min}(p) $.
\end{rem}
 
\begin{prop}
\label{prop:F_no_wind}
$ F: \textup{Ker}(dt) \rightarrow [0,\infty) $, defined pointwise by \eqref{eq:F_p_no_wind} with $ F(0) := 0 $, is a Finsler metric at each $t\in\R$ if and only if the slant function satisfies
\begin{equation}
\label{eq:cond_no_wind}
2\sin(\sigma(p)) < \frac{a(t,p)+h(t,p)}{h(t,p)}, \quad \forall t \in \R, p \in N.
\end{equation}
As a sufficient condition, this holds if $ a(t,p) > h(t,p) $ for all $ t\in \R, p \in N $.
\end{prop}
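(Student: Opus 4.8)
The plan is to recognize \eqref{eq:F_p_no_wind} as an $(\alpha,\beta)$\nobreakdash-metric and to apply the classical strong convexity criterion for such metrics, checking that in our setting all the resulting inequalities collapse to the single bound \eqref{eq:cond_no_wind}. Writing $b(t,p):=a(t,p)+h(t,p)$ and $s:=\beta_p(v)/\alpha_p(v)$, formula \eqref{eq:F_p_no_wind} reads
\[
F_{(t,p)}(v)=\frac{\alpha_p(v)}{b(t,p)+h(t,p)\,s}=\alpha_p(v)\,\phi_{(t,p)}(s),\qquad \phi_{(t,p)}(s):=\frac{1}{b(t,p)+h(t,p)\,s},
\]
where $\alpha_p$ is the \emph{positive definite} Riemannian norm on $T_pN$ obtained by pulling back the Euclidean metric of $\R^3$ through the graph $\hat z$, and $\beta_p=\df z_p$ is a one-form. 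First I would dispose of the routine properties: positive one-homogeneity (both $\alpha_p$ and $\beta_p$ are $1$-homogeneous, so $F$ is $1$-homogeneous), smoothness off the zero section, and positivity; all of these reduce to the denominator $b+hs$ staying positive, which is automatic once we know $|s|\le\|\beta_p\|_{\alpha_p}<1$ (see below), since then $b+hs\ge a+h(1-\sin\sigma(p))>0$. Hence the entire content of the statement lies in the positive definiteness of the fundamental tensor, equivalently the strong convexity of the indicatrix.

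The key preliminary is to identify $\|\beta_p\|_{\alpha_p}$. Using the explicit inverse of the graph metric $\alpha_{ij}=\delta_{ij}+\partial_iz|_p\,\partial_jz|_p$ (or, invariantly, noting that $\hat z$ is an isometric embedding and computing the tangential component of $\partial_z$), one gets
\[
\|\beta_p\|_{\alpha_p}^2=\frac{(\partial_xz|_p)^2+(\partial_yz|_p)^2}{1+(\partial_xz|_p)^2+(\partial_yz|_p)^2}=1-\cos^2(\sigma(p))=\sin^2(\sigma(p))
\]
by \eqref{eq:sigma}. Moreover, as $v$ ranges over $T_pN$, the quotient $s=\beta_p(v)/\alpha_p(v)=\cos(\delta(p,\theta_v))$ sweeps \emph{exactly} the interval $[-\sin\sigma(p),\sin\sigma(p)]$, the endpoints being attained along $\pm\nabla z$ (cf.\ Rem.\ \ref{rem:delta_no_wind}); this is what makes the criterion sharp, i.e.\ what will give the ``only if'' direction and not just the ``if''.

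Next I would invoke the standard criterion for $(\alpha,\beta)$\nobreakdash-metrics $F=\alpha\,\phi(s)$, $s=\beta/\alpha$ (see \cite{BCS}): with $b_0:=\sin\sigma(p)$, the function $F_{(t,p)}$ is a Minkowski norm on $\mathrm{Ker}(dt_{(t,p)})$ if and only if $\phi_{(t,p)}>0$, $\phi_{(t,p)}-s\phi_{(t,p)}'>0$, and $\phi_{(t,p)}-s\phi_{(t,p)}'+(b_0^2-s^2)\phi_{(t,p)}''>0$ on $[-b_0,b_0]$ — and in dimension two the second inequality genuinely has to be retained alongside the third. Dropping the $(t,p)$ and writing $b=a+h$, $h=h$, a direct differentiation of $\phi=(b+hs)^{-1}$ yields
\[
\phi-s\phi'=\frac{b+2hs}{(b+hs)^{2}},\qquad \phi-s\phi'+(b_0^2-s^2)\phi''=\frac{b^{2}+3bhs+2h^{2}b_0^{2}}{(b+hs)^{3}}.
\]
Since $b+hs>0$ throughout $[-b_0,b_0]$, the first condition is automatic. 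The second holds iff $b+2hs>0$ for all $|s|\le b_0$, i.e.\ iff $b-2hb_0>0$, which is precisely \eqref{eq:cond_no_wind}. The numerator in the third expression is affine in $s$ with positive slope $3bh$, hence minimized at $s=-b_0$, where it equals $(b-hb_0)(b-2hb_0)$; as $b-hb_0=a+h(1-\sin\sigma(p))>0$ always, this is positive iff $b-2hb_0>0$, again exactly \eqref{eq:cond_no_wind}. Putting this together pointwise in $(t,p)$ gives that $F$ is a Finsler metric at each $t$ iff $2\sin(\sigma(p))<(a(t,p)+h(t,p))/h(t,p)$ for all $p$; and the sufficient condition is immediate, since $a>h$ forces $(a+h)/h>2\ge 2\sin\sigma$.

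The main obstacle is not conceptual but is a matter of getting the bookkeeping right in two places: (i) carrying out the norm computation $\|\beta_p\|_{\alpha_p}=\sin\sigma(p)$ and, crucially for the ``only if'' direction, verifying that $s$ really does attain $\pm\sin\sigma(p)$; and (ii) using the correct form of the $(\alpha,\beta)$\nobreakdash-criterion in dimension two, where the inequality $\phi-s\phi'>0$ cannot be omitted. If one prefers a self-contained argument that does not quote the criterion, the effort simply moves to writing out the fundamental tensor $g^F_v$ of this $(\alpha,\beta)$\nobreakdash-metric and inspecting its determinant and one diagonal entry (both of which factor through the two quantities displayed above); this is longer but entirely elementary.
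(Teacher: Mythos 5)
Your proof is correct, and it reaches \eqref{eq:cond_no_wind} by a genuinely different route than the paper. The paper applies the Matsumoto-specific strong convexity criterion of \cite[Cor. 4.15]{JS14} to $(a+h)^2F$, which immediately gives the pointwise condition $(a+h)\alpha_p(v)+2h\beta_p(v)>0$, and then translates it into \eqref{eq:cond_no_wind} via the worst slope angle $\delta_{max}=\tfrac{\pi}{2}+\sigma$ (Rem. \ref{rem:delta_no_wind}); you instead view \eqref{eq:F_p_no_wind} as the general $(\alpha,\beta)$-metric $F=\alpha\,\phi(\beta/\alpha)$ with $\phi(s)=(b+hs)^{-1}$, $b=a+h$, compute $\|\beta_p\|_{\alpha_p}=\sin\sigma(p)$ (with the endpoints of $s$ attained along $\pm\nabla z$, which is exactly what makes the ``only if'' direction work), and check the standard convexity inequalities for $\phi$. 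Note that your intermediate condition $\phi-s\phi'>0$ is literally the paper's condition $b\alpha+2h\beta>0$, so the two arguments meet at the same inequality; your approach costs the extra computation of $\|\beta\|_\alpha$ and of the second inequality, but buys independence from the tailored corollary in \cite{JS14} and makes transparent why the bound is sharp. One small correction: your claim that in dimension two the inequality $\phi-s\phi'>0$ must be retained alongside $\phi-s\phi'+(b_0^2-s^2)\phi''>0$ is not quite right. In dimension two, since $g_v(v,v)=F(v)^2>0$ by homogeneity, a symmetric $2\times 2$ matrix with positive determinant is automatically positive definite, and the determinant of $g_v$ carries no factor $(\phi-s\phi')^{n-2}$; so positive definiteness is equivalent to $\phi>0$ together with the determinant condition alone. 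This slip is harmless here: for your specific $\phi$ the two inequalities are equivalent (given $b-hb_0>0$, which always holds), the determinant condition is genuinely necessary at the attained endpoint $s=-\sin\sigma$, and both directions of the equivalence with \eqref{eq:cond_no_wind} therefore go through exactly as you wrote them.
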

\begin{proof}
Since we are assuming that the functions $ \hat{z}(p) $, $ a(t,p) $ and $ h(t,p) $ vary smoothly with $ t $ and $ p $, it suffices to check the conditions under which $ F_{(t,p)} $ is a Minkowski norm for all $ t \in \R $, $ p \in N $ (recall Def.~\ref{def:finsler}(i)):
\begin{itemize}
\item $ F_{(t,p)} $ is always positive, since we have constructed $ s_{fire} $ in \eqref{eq:mod_no_wind} to be so. To see this explicitly, note first that
\begin{equation*}
\cos(\delta(p,\theta_v)) = \frac{\langle \df\hat z_p(v),\partial_z|_p \rangle}{||\df\hat z_p(v)||} = \frac{\beta_p(v)}{\alpha_p(v)}
\end{equation*}
and thus, from \eqref{eq:F_p_no_wind}, $ F_{(t,p)} $ is positive on $ \textup{Ker}(dt_{(t,p)})\setminus\{0\} $ if and only if
\begin{equation*}
\cos(\delta(p,\theta_v)) > -\frac{a(t,p)+h(t,p)}{h(t,p)}, \quad \forall v \in \textup{Ker}(dt_{(t,p)})\setminus\{0\},
\end{equation*}
which always holds. Moreover, since $ \alpha_p $ and $ \beta_p $ are both smooth, so is $ F_{(t,p)} $ on $ \textup{Ker}(dt_{(t,p)})\setminus\{0\} $. Also, the definition $ F_{(t,p)}(0) := 0 $ makes $ F $ continuous and $ F_{(t,p)}(v) = 0 \Leftrightarrow v = 0 $.

\item $ F_{(t,p)} $ is always positive homogeneous, since $ \alpha_p(\lambda v) = \lambda\alpha_p(v) $ and $ \beta_p(\lambda v) = \lambda\beta_p(v) $ for all $ \lambda > 0 $. This also guarantees that $ \Sigma_{(t,p)} $ is a closed smooth hypersurface embedded in $ \textup{Ker}(dt_{(t,p)}) $, as $ 1 $ is a regular value of $ F_{(t,p)} $.

\item From \cite[Cor. 4.15]{JS14} (applied to $ (a+h)^2F $), the fundamental tensor of $ F_{(t,p)} $ is positive definite in every direction or, equivalently, the indicatrix $ \Sigma_{(t,p)} $ is strongly convex if and only if
\begin{equation*}
(a(t,p)+h(t,p))\alpha_p(v)+2h(t,p)\beta_p(v) > 0, \quad \forall v \in \textup{Ker}(dt_{(t,p)})\setminus\{0\},
\end{equation*}
which is equivalent to
\begin{equation*}
2\cos(\delta(p,\theta_v)) > -\frac{a(t,p)+h(t,p)}{h(t,p)}, \quad \forall v \in \textup{Ker}(dt_{(t,p)})\setminus\{0\}.
\end{equation*}
Observe that this condition is more restrictive the bigger $ \delta $ is, with $ \delta \in (0,\pi) $. In fact, from Rem.~\ref{rem:delta_no_wind}, $ \delta_{max}=\pi-\delta_{min}=\frac{\pi}{2}+\sigma $ and using that $ \cos(\frac{\pi}{2}+\sigma)=-\sin\sigma $, the previous condition is equivalent to \eqref{eq:cond_no_wind}.
\end{itemize}
\end{proof}

\subsection{Including the effect of the wind}
\label{subsec:wind}
The effect of the wind might be quite subtle. If one considers, for example, the propagation of a sound wave in the air, a reasonable approximation might be just to displace the indicatrix $\Sigma$, translating it with the vector field that represents the velocity of the wind (see \cite{GW10,GW11}). In the case of a wildfire, however, the situation is much more complex (see, e.g., \cite{GHDM}) and empirical models are needed. Among the contributions, we will focus on Anderson's double semi-elliptical model without slope \cite{An} and the natural choice of considering the ignition point as one of the focal points in the elliptical case \cite{Al,F}. This last assumption, although reasonable, might not be necessarily true in real wildfires; anyway, it is certainly the simplest option as a first approximation, with other choices over-complicating the model. Having this in mind, our proposed fire speed is now
\begin{equation}
\label{eq:mod_wind}
s_{fire}(t,p,\theta) = \frac{a(t,p)[1-\varepsilon(t,p)^2]}{1-\varepsilon(t,p)\cos(\tilde\theta(p,\theta)-\tilde\phi(t,p))} + h(t,p)[1+\cos(\delta(p,\theta))],
\end{equation}
where we have maintained the same contribution of the slope, but the first term has been modified to be elliptical instead of spherical (compare with \eqref{eq:mod_no_wind}). Specifically:
\begin{itemize}
\item Let $ E \subset \R^2 $ be an ellipse of eccentricity $ \varepsilon \in [0,1) $ centered at one of its foci, with its semi-major axis $ a $ oriented in the direction $ \phi \in [0,2\pi) $ (angle with respect to the $ x $-axis). Then $ \frac{a(1-\varepsilon^2)}{1-\varepsilon\cos(\theta-\phi)} $ is the length of the vector from the origin (focus) to $ E $ in the direction $ \theta $. Specifically, the centered focus must be the one such that when $ \phi=0 $, the direction $ \theta=0 $ points towards the other focus (see Fig.~\ref{fig:wind_a}). 

\item When the ellipse is on an inclined plane, $ E \subset T_{\hat z(p)}\hat N \subset \R^3 $ (the case when there is a slope), then the angles $ \phi,\theta $, which measure the aerial direction on $ T_pN \subset \R^2 $ with respect to the $ x $-axis, must be exchanged by $ \tilde\phi,\tilde\theta $, which measure the actual direction on $ T_{\hat z(p)}\hat N $ with respect to $ \df \hat z(\partial_x) $. Using \eqref{eq:u}, note that the aerial and actual directions are related by
\begin{equation*}
\cos\tilde\phi = \frac{\langle u_{\phi}|_p,\df \hat z_p(\partial_x) \rangle}{||\df \hat z_p(\partial_x)||} = \frac{\cos\phi+\cos\phi(\partial_xz|_p)^2+\sin\phi\partial_xz|_p\partial_yz|_p}{\sqrt{1+(\cos\phi\partial_xz|_p+\sin\phi\partial_yz|_p)^2}\sqrt{1+(\partial_xz|_p)^2}},
\end{equation*}
\begin{equation*}
\sin\tilde\phi = \frac{\langle u_{\phi}|_p,\df \hat z_p(\partial_y) \rangle}{||\df \hat z_p(\partial_y)||} = \frac{\sin\phi+\sin\phi(\partial_yz|_p)^2+\cos\phi\partial_xz|_p\partial_yz|_p}{\sqrt{1+(\cos\phi\partial_xz|_p+\sin\phi\partial_yz|_p)^2}\sqrt{1+(\partial_yz|_p)^2}}
\end{equation*}
(the same equations also apply to $ \tilde\theta $ and $ \theta $). Then the first term on the right-hand side of \eqref{eq:mod_wind} is the length of the vector from the origin to $ E $ in the aerial direction $ \theta $.

\item When the elliptical term is added to the term $ h $ in \eqref{eq:mod_wind} (neglecting $ \cos\delta $), then the shape obtained becomes the directional sum of the ellipse $ E $ and the sphere $ S_h $ of radius $ h $ centered at the origin.\footnote{By ``directional sum'' of $ E $ and $ S_h $ we mean that for each (oriented) direction $ \theta $, we sum the vectors that go from the origin to $ E $ and $ S_h $ in that direction $ \theta $.} This resembles a double semi-ellipse (see $ E+S_h $ in Fig.~\ref{fig:wind_b}), argued as the most realistic representation without slope (observe the fire growth fittings of real wildfires in \cite{An}).

\item Finally, the last term $ h\cos\delta $ implements the contribution of the slope in the same way as above (see $ \Sigma $ in Fig.~\ref{fig:wind_b}).
\end{itemize}

\begin{figure}
\centering
\subfigure[Elliptical term of the model without slope.]{\label{fig:wind_a}\includegraphics[width=0.49\textwidth]{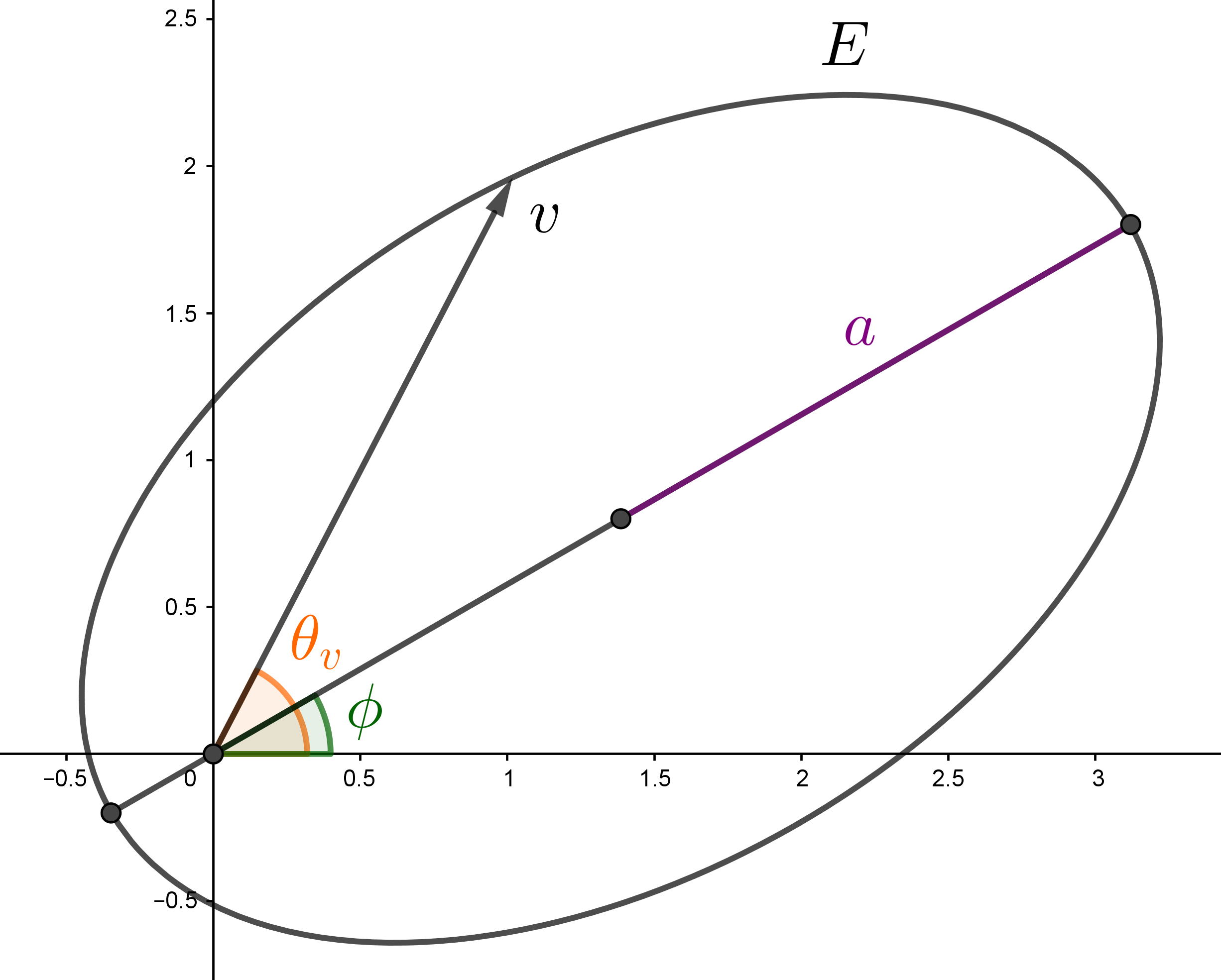}}
\subfigure[Comparison between the cases with and without slope.]{\label{fig:wind_b}\includegraphics[width=0.49\textwidth]{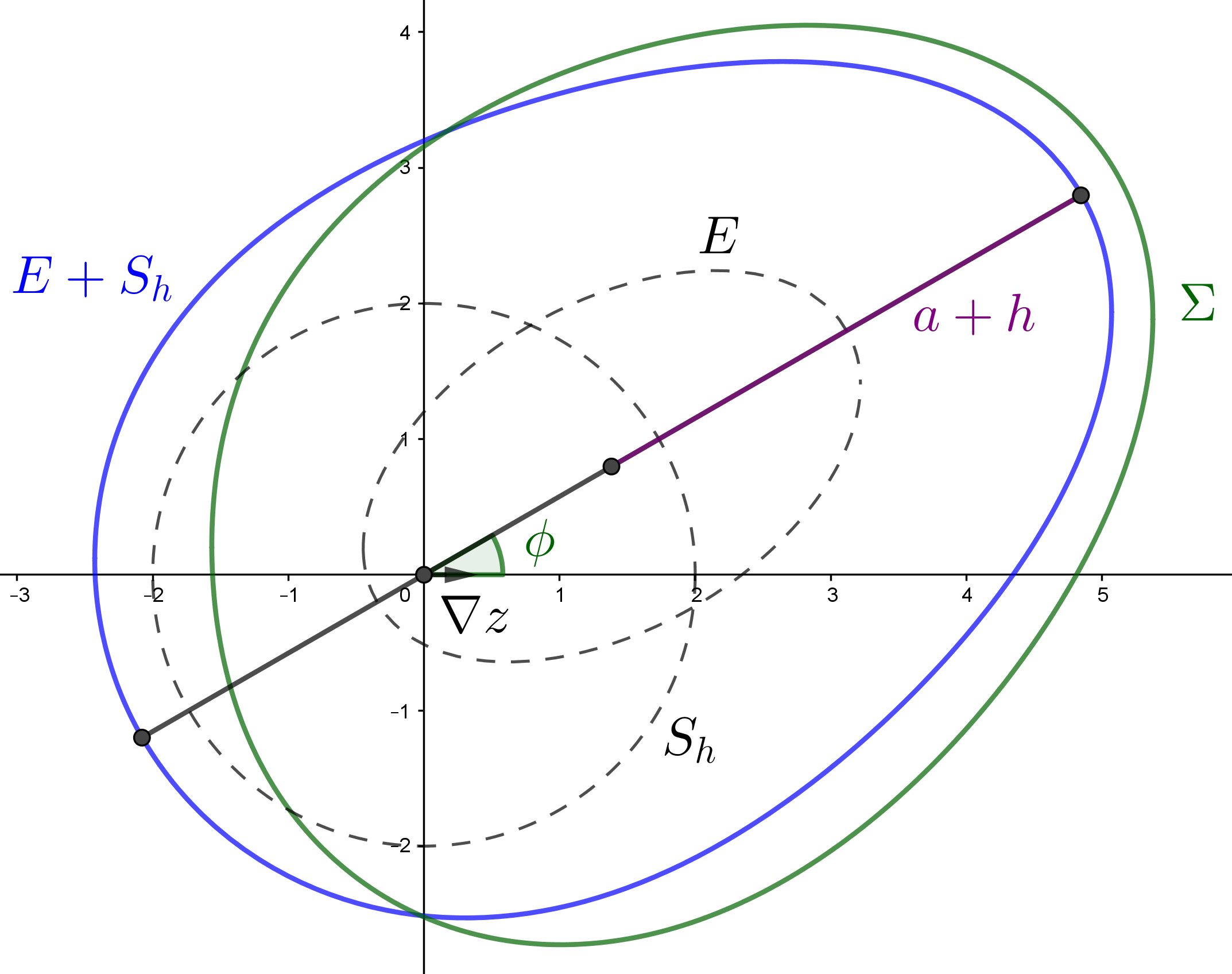}}
\caption{On the left, the underlying ellipse $ E $ corresponding to the first term in \eqref{eq:mod_wind} if there was no slope (so that $ \phi=\tilde\phi $ and $ \theta=\tilde\theta $). It is also depicted on the right in order to appreciate the shift generated by the other contributions. When the sphere $ S_h $ is added, one obtains a shape similar to a double semi-ellipse with semi-major axis $ a+h $, which would be the indicatrix without slope. Finally, $ \Sigma $ incorporates the effect of the slope, with $ \nabla z $ indicating the direction of maximum slope. Note that $ E+S_h $ provides the actual velocities (in the case without slope), while $ \Sigma $ is a projection and contains the aerial ones. Data: $ \partial_xz=2/5 $, $ \partial_yz=0 $, $ a=h=2 $, $ \varepsilon=0.8 $ and $ \phi=\pi/6 $.}
\label{fig:wind}
\end{figure}

\begin{rem}
Recall that in \eqref{eq:mod_wind} the wind and slope terms directly represent the speed of the fire induced by both phenomena. This makes it possible for the two terms to be added together, despite the different nature of both effects.\footnote{Another natural option would be to consider the Minkowski sum of both contributions, although the explicit expression of the Finsler metric in this case would become much more complicated.} In fact, the wind does not appear explicitly in our model and we do not specify how every function depends on the fuel and meteorological conditions, nor do we say how to construct the double semi-ellipse from them. Obviously, this issue has to be approached from a more physical and empirical viewpoint, and therefore it goes beyond the scope of this work. In fact, the phenomena that take place in a wildfire are quite complex. Physically, the heat transfer is carried out by three fundamental mechanisms: convention, radiation and conduction, and behind each one there is a complete physical model (see, e.g., \cite{Rot}, which is the main physical model used by FARSITE to construct the ellipse at each point).

Anyway, our approach here is general in the sense that all the unknown functions can be adapted to fit empirical data and roughly account for all the physical mechanisms. The only features intrinsic to our model are: (a) without slope, the shape of the indicatrix (infinitesimal propagation of the fire) at each point can be represented by the (directional) sum of an ellipse and a sphere, with the ignition point being one of the foci of the ellipse, (b) the spherical term couples with the contribution of the slope,\footnote{This is because $ h $ appears both in the double semi-ellipse and the slope term. This coupling ensures that $ s_{fire} $ in \eqref{eq:mod_wind} is always positive. It is not restrictive though, as we can use different functions for both terms, i.e., replace $ h(1+\cos\delta) $ in \eqref{eq:mod_wind} with $ h+h'\cos\delta $, being $ h'(t,p) $ a positive function independent of $ h(t,p) $. From a practical viewpoint, this may become relevant when trying to fit empirical data.} and (c) this contribution comes from a Matsumoto-type metric.

Finally, it is worth mentioning that the usual elliptical approximation (e.g., the one used by FARSITE) can be recovered in our model by setting $ h \equiv 0 $, with the obvious loss of the Matsumoto term to model the slope.
\end{rem}

Using \eqref{eq:delta}, \eqref{eq:u} and the trigonometric identity $ \cos(\tilde\theta-\tilde\phi)=\cos\tilde\theta\cos\tilde\phi+\sin\tilde\theta\sin\tilde\phi $, \eqref{eq:mod_wind} is equivalent to
\begin{equation*}
s_{fire}(t,p,\theta) = \frac{a(1-\varepsilon^2)}{1-\varepsilon(\cos\tilde\theta\cos\tilde\phi+\sin\tilde\theta\sin\tilde\phi)} + h(1+\langle u_{\theta},\partial_z \rangle),
\end{equation*}
and therefore, a vector $ v=(v_1,v_2) \in T_pN $ represents the aerial velocity of the fire, i.e., $ ||\df \hat z_p(v)||=s_{fire}(t,p,\theta_v) $, if and only if
\begin{equation}
\label{eq:alpha_wind}
\alpha(v)^2=\alpha(v)\frac{a(1-\varepsilon^2)}{1-\varepsilon(\cos\tilde\theta_v\cos\tilde\phi+\sin\tilde\theta_v\sin\tilde\phi)}+h(\alpha(v)+\beta(v)),
\end{equation}
where $ \alpha $ and $ \beta $ are given by \eqref{eq:alpha_beta}, and $ \tilde{\theta}_v $ is the angle between $ \df \hat z_p(v) $ and $ \df \hat z_p(\partial_x) $, namely,
\begin{equation*}
\cos\tilde\theta_v = \frac{\langle \df \hat z_p(v),\df \hat z_p(\partial_x) \rangle}{\alpha_p(v)||\df \hat z_p(\partial_x)||} = \frac{v_1+\partial_xz|_p\beta_p(v)}{\alpha_p(v)\sqrt{1+(\partial_xz|_p)^2}},
\end{equation*}
\begin{equation*}
\sin\tilde\theta_v = \frac{\langle \df \hat z_p(v),\df \hat z_p(\partial_y) \rangle}{\alpha_p(v)||\df \hat z_p(\partial_y)||} = \frac{v_2+\partial_yz|_p\beta_p(v)}{\alpha_p(v)\sqrt{1+(\partial_yz|_p)^2}}.
\end{equation*}
So, if we define the one-form
\begin{equation*}
\omega_{(t,p)}(v) := \frac{v_1+\partial_xz|_p\beta_p(v)}{\sqrt{1+(\partial_xz|_p)^2}}\cos\tilde\phi(t,p)+\frac{v_2+\partial_yz|_p\beta_p(v)}{\sqrt{1+(\partial_yz|_p)^2}}\sin\tilde\phi(t,p),
\end{equation*}
then \eqref{eq:alpha_wind} reduces to
\begin{equation*}
\alpha(v)^2 = \alpha(v)^2\frac{a(1-\varepsilon^2)}{\alpha(v)-\varepsilon\omega(v)}+h(\alpha(v)+\beta(v)).
\end{equation*}
Following the same steps as above, we arrive at a similar expression for the indicatrix $ \Sigma_{(t,p)} $, which is given by \eqref{eq:indx_matsumoto} with
\begin{equation*}
Q_{(t,p)}(v) := \alpha_p(v)^2\left(1-\frac{a(t,p)(1-\varepsilon(t,p)^2)}{\alpha_p(v)-\varepsilon(t,p)\omega_{(t,p)}(v)}\right)-h(t,p)(\alpha_p(v)+\beta_p(v)),
\end{equation*}
and for the metric $ F $:
\begin{equation}
\label{eq:F_p_wind}
F_{(t,p)}(v) = \frac{\alpha_p(v)^2}{\alpha_p(v)^2\frac{a(t,p)(1-\varepsilon(t,p)^2)}{\alpha_p(v)-\varepsilon(t,p)\omega_{(t,p)}(v)}+h(t,p)(\alpha_p(v)+\beta_p(v))},
\end{equation}
for all $ v \in \textup{Ker}(dt_{(t,p)})\setminus\{0\} $.

\begin{rem}
\label{rem:convex}
Note that \eqref{eq:F_p_wind} reduces to the Matsumoto metric \eqref{eq:F_p_no_wind} when $ \varepsilon=0 $ (i.e., without wind). This ensures that, as long as \eqref{eq:cond_no_wind} holds, \eqref{eq:F_p_wind} is a Finsler metric for small $ \varepsilon $. The general condition to guarantee the strong convexity of the metric, however, is not as straightforward as in the situation without wind and greatly depends on $ \varepsilon $ and the orientation $ \tilde\phi $. Examples can be easily found where \eqref{eq:cond_no_wind} holds but \eqref{eq:F_p_wind} fails to be strongly convex, and vice versa, cases where \eqref{eq:F_p_no_wind} is not strongly convex, but certain values of $ \varepsilon $ and $ \tilde\phi $ make \eqref{eq:F_p_wind} be a proper Finsler metric (particularly if the wind blows opposite to the direction of maximum slope).
\end{rem}


\section{Computation of the firefront}
\label{sec:calculation}
Given the Finsler metric $ F $ whose indicatrix $ \Sigma $ contains the velocity of the fire at each point and time, we can compute the wildfire propagation following the general procedure for waves developed in \cite{JPS}. This characterizes the fastest trajectories of the wave propagation as geodesics of a suitable metric (equation \eqref{eq:ode} below). Here we will summarize the approach applied to our particular case when $ F $ is given by \eqref{eq:F_p_wind}.

\subsection{Spacetime viewpoint} \label{ss_spacetime_viewpoint}
Generalized Huygens' envelope principle characterizes the wildfire propagation by means of a hyperbolic PDE (e.g., Richards' equations \cite{R} when the propagation is elliptical). However, using  $ \Sigma $ and the time coordinate $t$, we can construct a pointwise cone of maximum speed of propagation for any wave in the spacetime framework. This resembles the relativistic picture of nullcones to model the propagation of light in General Relativity, whose methods are applied now.

If the fire starts from a region $B_0\subset \{t=0\}$ delimited by a closed curve $S_0$, then it will propagate to some region $J^+(B_0) \subset  M=\R\times N$,  the (relativistic) {\em causal future} of $B_0$. Moreover, its boundary $ \partial J^+(B_0)$ yields the outermost trajectories of the fire at each $ t_0 > 0 $, namely, $ \partial J^+(B_0)\cap\{t=t_0\} $ provides the firefront at $ t_0 $ (see \cite[\S~3]{JPS}). The {\em firemap}\footnote{This is called in general {\em wavemap} in \cite{JPS}. The notational distinction between $S$ and $S_0$ therein is  irrelevant here.}
$$
\hat{f}: [0,+\infty)\times S_0 \rightarrow M, \qquad (t,s) \mapsto \hat f(t,s)=(t,f(t,s)),
$$
is defined such that, for each $ s \in S_0 $, $ t \mapsto \hat f(t,s) $ is the $ t $-parametrized cone geodesic of the Finsler spacetime\footnote{See \cite[\S~3]{JS20} and \cite[\S~2]{JPS} for background regarding Finsler spacetimes and this type of Lorentz-Finsler metrics. In particular, we assume $ (M,G) $ to be globally hyperbolic, with every slice $ \{t=t_0\} $ being a Cauchy hypersurface (i.e., a surface which is crossed exactly once by any inextendible causal curve); see \cite[Rem. 3.2]{JPS}.} $ (M,G:=dt^2-F^2) $ with initial velocity the unique lightlike vector $ G $-orthogonal to $ S_0 $ and pointing out to the exterior of $ S_0 $.\footnote{This is equivalent to saying that the projection of the initial velocity on $ TN $ is $ F $-orthogonal to $ S_0 $ and points out to the exterior of $ S_0 $.} Then, the curve $ t \mapsto \hat f(t,s_0)=(t,f(t,s_0)) $ represents the {\em spacetime trajectory} of the fire from $ s_0\in S_0 $, being its projection $ t \mapsto f(t,s_0) $ the {\em spatial trajectory} on $ N $. Each of these curves remains entirely in $ \partial J^+(B_0) $ and minimizes the propagation time in the domain of its {\em (null) cut function}
\begin{equation}
\label{eq:cut}
c: S_0\rightarrow [0,\infty], \qquad s\mapsto c(s):=\text{Max}\{t:\hat f(t,s)\in \partial J^+(B_0)\}.
\end{equation}
If $ c(s_0)=t_0 < \infty $ for some $ s_0 \in S_0 $, then $ t_0 $ and $ \hat f(t_0,s_0) $ are, respectively, the {\em cut instant} and {\em cut point} of the corresponding wildfire trajectory.

It can be shown that there always exists a time $ \epsilon>0 $ below which no cut points appear (see \cite[Thm. 4.8]{JPS}). This ensures that at least in a short lapse $ [0,\epsilon] $, the firemap maps $ \partial J^+(B_0) $ (and this portion of $ \partial J^+(B_0) $ becomes a smooth embedded hypersurface of $ M $ up to $ B_0 $), i.e., for any fixed $ t_0\in[0,\epsilon] $, the curve $ s \mapsto \hat f(t_0,s) $ is the firefront at $ t_0 $. So, we can directly determine how the fire evolves by computing the firemap $ \hat f $, at least until $ t=\epsilon $. Moreover, the role of the cut function will be discussed in \S~\ref{subsec:cut_points} (see also Appendix~\ref{appendix1}) and we will see that the computation of the firefront can be easily extended to $ t>\epsilon $.

\subsection{PDE vs ODE systems} \label{ss_ode}
From now on, the following notation will be used, intended to simplify the expressions we will find later:
\begin{itemize}
\item $ \partial_t\hat f(t,s)=(1,\partial_tf(t,s)) $ and $ \partial_s\hat f(t,s)=(0,\partial_sf(t,s)) $ will denote the corresponding velocities (tangent vectors) of the curves $ t \mapsto \hat f(t,s) $ and $ s \mapsto \hat f(t,s) $, respectively.

\item $ (x^0,x^1,x^2):=(t,x,y) $, where $ (t,x,y) $ are the natural coordinate functions on $ M=\R\times N$.

\item $ g_{ij}(\hat v) := g^G_{\hat v}(\frac{\partial}{\partial x^i},\frac{\partial}{\partial x^j}) $ for any $ \hat v \in TM \setminus \textup{Span}(\frac{\partial}{\partial t}) $, where $ g^G $ is the fundamental tensor of $ G = dt^2-F^2 $.\footnote{Since $ G $ is a Lorentz-Finsler metric, $ g^G $ is obtained as in \eqref{eq:fund_tensor} replacing $ F^2 $ with $ G $ (see \cite[Def. 2.6]{JPS}).} In particular, we will choose $ \hat v = \partial_t\hat f $, in which case the relationship between the coefficients of $ g_{\partial_t\hat f}^G $ and those of $ g_{\partial_t f}^F $ is
\begin{equation*}
\lbrace g_{ij}(\partial_t\hat f) \rbrace = 
\begin{pmatrix}
1 & 0 & 0\\
0 & -g_{11}^F(\partial_tf) & -g_{12}^F(\partial_tf)\\
0 & -g_{12}^F(\partial_tf) & -g_{22}^F(\partial_tf)
\end{pmatrix}.
\end{equation*}

\item $ g^{ij}(\hat v) $ will denote the coefficients of the inverse matrix of $ \{g_{ij}(\hat v)\}$.

\item Finally, we will use Einstein's summation convention, omitting the sums from $ 0 $ to $ 2 $ when an index appears up and down, and raising and lowering indices using $ g_{ij} $ and $ g^{ij} $.
\end{itemize}

In order to obtain the firemap, we need to calculate the lightlike geodesics of $ (M,G) $, i.e., solve the ODE system given by the geodesic equations (see \cite[Cor. 4.13]{JPS}). Note that in these equations the {\em formal Christoffel symbols} of $G$ appear (see \eqref{eq:ode} below), which are defined as
$$
\gamma_{\ ij}^k(\hat v) := \frac{1}{2}g^{kr}(\hat v)\left(\frac{\partial g_{rj}}{\partial x^i}(\hat v)+\frac{\partial g_{ri}}{\partial x^j}(\hat v)-\frac{\partial g_{ij}}{\partial x^r}(\hat v)\right), \quad i,j,k = 0,1,2,
$$
for any $ \hat v=(\tau,v)=(\tau,v_1,v_2) \in TM\setminus \text{Span}(\frac{\partial}{\partial t}) $.

Alternatively, the firemap is also characterized in $ [0,\epsilon]\times S_0 $ by the PDE system given by the following {\em orthogonality conditions} (see \cite[Thm. 4.14]{JPS}):
\begin{equation}
\label{eq:ort_cond_F}
\left\lbrace{
\begin{array}{l}
F(\partial_tf(t,s))  = 1, \ \text{with} \ \partial_tf(t,s) \ \text{pointing outwards},\\
\partial_tf(t,s) \bot_F \partial_sf(t,s).
\end{array}
}\right.
\end{equation}
Observe that, if $S_0$ is counter-clockwise parametrized, for $ \partial_tf(t,s) $ to be pointing outwards we only need to ensure that
\begin{equation}
\label{eq:outwards}
\begin{vmatrix}\partial_t x(0,s) & \partial_s x(0,s) \\ \partial_t y(0,s) & \partial_s y(0,s) \end{vmatrix}>0
\end{equation}
at some $s\in S_0$.

Both the geodesic equations and the orthogonality conditions can be stated in general, for any wave and any propagation pattern (see \cite{JPS}). In fact, Richards' equations \cite{R} are precisely the orthogonality conditions when an elliptical growth is assumed.
The orthogonality conditions have also been found and applied previously in \cite{M16,M17}. Now we particularize both equation systems to our model, i.e., when the propagation is given by the Finsler metric \eqref{eq:F_p_wind}.

\begin{thm}
\label{th:pde_ode}
The firemap $ \hat{f}(t,s) = (t,f(t,s)) $ for a wildfire given by the Finsler metric \eqref{eq:F_p_wind} is determined in $ t \in [0,\epsilon] $ by the following PDE system for the partial derivatives $ \partial_tf(t,s) $ and $ \partial_sf(t,s) $, along with the initial condition $ f(0,s) = s \in S_0 $:
\begin{equation}
\label{eq:pde}
\left\lbrace{
\begin{split}
0 = \ & \alpha(\partial_tf)^2 \left(1-\frac{a(1-\varepsilon^2)}{\alpha(\partial_tf)-\varepsilon \ \omega(\partial_tf)}\right) - h(\alpha(\partial_tf)+\beta(\partial_tf)),\\
0 = \ & \frac{a(1-\varepsilon^2)}{[\alpha(\partial_tf)-\varepsilon \ \omega(\partial_tf)]^2} [ \varepsilon (2\langle \df\hat z(\partial_tf),\df\hat z(\partial_sf) \rangle \omega(\partial_tf) \\
& -\alpha(\partial_tf)^2\omega(\partial_sf)) - \alpha(\partial_tf)\langle \df\hat z(\partial_tf),\df\hat z(\partial_sf) \rangle ] \\
& + 2\langle \df\hat z(\partial_tf),\df\hat z(\partial_sf) \rangle - h \left( \frac{\langle \df\hat z(\partial_tf),\df\hat z(\partial_sf) \rangle}{\alpha(\partial_tf)}+\beta(\partial_sf)\right),
\end{split}
}\right.
\end{equation}
with $ \partial_tf $ pointing outwards (i.e., satisfying \eqref{eq:outwards}). Alternatively, $ \hat{f}(t,s) = (t,x^1(t,s),x^2(t,s)) $, with $ x^0=t $, is determined in $ t \in [0,\infty) $ by the following ODE system:
\begin{equation}
\label{eq:ode}
\partial_t^2x^k = -\gamma_{\ ij}^k(\partial_t\hat{f})\partial_tx^i\partial_tx^j + \gamma_{\ ij}^0(\partial_t\hat{f})\partial_tx^i\partial_tx^j\partial_tx^k, \quad k=1,2,
\end{equation}
along with the initial conditions
\begin{itemize}
\item $ f(0,s) = s \in S_0 $,
\item $ \partial_tf(0,s) $ is the unique $ F $-unit vector $ F $-orthogonal to $ \partial_sf(0,s) $ and pointing outwards, i.e., it is given by \eqref{eq:pde} and \eqref{eq:outwards} at $ t = 0 $.
\end{itemize}
\end{thm}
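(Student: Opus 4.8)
The plan is to derive both systems as the specialization, to the metric \eqref{eq:F_p_wind}, of the general results of \cite{JPS} recalled in \S\ref{ss_spacetime_viewpoint} and \S\ref{ss_ode}, so that the only genuine computation is writing the orthogonality conditions \eqref{eq:ort_cond_F} explicitly. For the ODE system I would argue directly: by construction, for each fixed $s$ the curve $t\mapsto\hat f(t,s)$ is the $t$-parametrized cone geodesic of the Finsler spacetime $(M,G=dt^2-F^2)$ issuing from $s\in S_0$ with initial velocity the unique lightlike vector $G$-orthogonal to $S_0$ pointing towards the exterior of $S_0$. By \cite[Cor. 4.13]{JPS}, such $t$-parametrized lightlike geodesics are precisely the solutions of \eqref{eq:ode}, where the extra term $\gamma_{\ ij}^0(\partial_t\hat f)\partial_tx^i\partial_tx^j\partial_tx^k$ accounts for the use of $x^0=t$ as parameter rather than an affine one; this holds on the whole (maximal) domain of the geodesic, hence on $[0,\infty)$ under the global hyperbolicity assumption. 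The initial conditions are then immediate: $f(0,s)=s$, while $\partial_tf(0,s)$ is the projection onto $TN$ of the initial lightlike vector, i.e., the $F$-unit vector $F$-orthogonal to $\partial_sf(0,s)$ pointing outwards, which is exactly what \eqref{eq:pde} and \eqref{eq:outwards} express at $t=0$.

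For the PDE system I would start from the orthogonality conditions \eqref{eq:ort_cond_F}, which hold on $[0,\epsilon]\times S_0$ by \cite[Thm. 4.14]{JPS}. The first condition, $F(\partial_tf)=1$, is by \eqref{eq:F_p_wind} equivalent to $Q_{(t,p)}(\partial_tf)=0$, with $Q_{(t,p)}$ the defining function of the indicatrix \eqref{eq:indx_matsumoto}; spelling this out gives verbatim the first line of \eqref{eq:pde}. For the second condition, by Def. \ref{def:finsler}(iii), $\partial_tf\bot_F\partial_sf$ means $\left.\tfrac12\tfrac{\partial}{\partial\delta}F(\partial_tf+\delta\,\partial_sf)^2\right\rvert_{\delta=0}=0$. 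The observation that makes the computation tractable is that on the indicatrix $\{F_{(t,p)}=1\}$ one has $Q_{(t,p)}=\alpha_p^2\bigl(1-1/F_{(t,p)}\bigr)$, so at $\partial_tf$ the differential of $Q_{(t,p)}$ is the positive multiple $\alpha_p(\partial_tf)^2$ of that of $F_{(t,p)}$; hence the orthogonality relation is equivalent to the vanishing of the directional derivative of $Q_{(t,p)}$ at $\partial_tf$ in the direction $\partial_sf$. It then remains to differentiate $Q_{(t,p)}$, using $\left.\tfrac12\tfrac{\partial}{\partial\delta}\alpha_p(v+\delta w)^2\right\rvert_{\delta=0}=\langle\df\hat z_p(v),\df\hat z_p(w)\rangle$ (from $\alpha_p(v)^2=\|\df\hat z_p(v)\|^2$) together with the linearity of $\beta_p$ and $\omega_{(t,p)}$; collecting the resulting terms over the common denominator $(\alpha(\partial_tf)-\varepsilon\,\omega(\partial_tf))^2$ reproduces the second line of \eqref{eq:pde}. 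Finally, the ``pointing outwards'' clause is handled exactly as in the paragraph preceding the statement: since $\partial_tf$ never vanishes and varies continuously in $s$, it suffices to check the orientation inequality \eqref{eq:outwards} at a single $s\in S_0$.

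The bookkeeping of that last differentiation is the only laborious part, but it is routine. The one point deserving care is the identification of $\partial_tf\bot_F\partial_sf$ with the vanishing of the directional derivative of $Q_{(t,p)}$, i.e., the fact that the Okubo-type defining function $Q_{(t,p)}$ associated with \eqref{eq:F_p_wind} --- which vanishes on the indicatrix and whose differential there is a positive multiple of that of $F_{(t,p)}$ --- may replace $F_{(t,p)}$ in the orthogonality relation; beyond this I do not expect any genuine obstacle.
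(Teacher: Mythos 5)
Your proposal is correct, and its skeleton coincides with the paper's: the ODE part is obtained, exactly as in the paper, by invoking \cite[Cor. 4.13, \S 4.2]{JPS} for the $t$-parametrized lightlike ($G$-orthogonal to $S_0$, outward-pointing) geodesics of $G=dt^2-F^2$, the first equation of \eqref{eq:pde} is just $F(\partial_tf)=1$ read off from \eqref{eq:F_p_wind}, and the outward condition is reduced to checking \eqref{eq:outwards} at a single $s$. Where you genuinely diverge is the derivation of the second equation. The paper computes the fundamental tensor $g^F_v(v,u)=F(v)\,\partial_\delta F(v+\delta u)|_{\delta=0}$ explicitly for the metric \eqref{eq:F_p_wind} (an expression carrying the cube of the denominator of $F$), imposes $g^F_{\partial_tf}(\partial_tf,\partial_sf)=0$, and only afterwards simplifies using $F(\partial_tf)=1$. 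You instead use the Okubo defining function: the identity $Q_{(t,p)}=\alpha_p^2\,(1-1/F_{(t,p)})$ is indeed correct, and differentiating it at a point of the indicatrix (where $F_{(t,p)}=1$, so the term coming from $\df(\alpha_p^2)$ drops) gives $\df Q_{(t,p)}=\alpha_p^2\,\df F_{(t,p)}$ there; combined with $g^F_{\partial_tf}(\partial_tf,\partial_sf)=F(\partial_tf)\,\df F_{\partial_tf}(\partial_sf)$ and the first equation, orthogonality is equivalent to the vanishing of the directional derivative of $Q_{(t,p)}$ at $\partial_tf$ along $\partial_sf$, and carrying out that derivative with $\tfrac12\partial_\delta\alpha(v+\delta u)^2|_{\delta=0}=\langle\df\hat z(v),\df\hat z(u)\rangle$ and the linearity of $\beta$ and $\omega$ does reproduce the second line of \eqref{eq:pde} term by term. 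Your route buys a cleaner computation, since the normalization $F(\partial_tf)=1$ is built in from the start and no a posteriori simplification is needed; the paper's route buys the explicit formula for $g^F_v(v,u)$ at arbitrary $v$, an object of some independent use. Both are valid, and the only point needing care in yours --- the legitimacy of replacing $F$ by $Q$ in the orthogonality relation --- is exactly the point you justified.
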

\begin{proof}
The first orthogonality condition $ F(\partial_tf) = 1 $ directly yields the first equation in \eqref{eq:pde}. Now, it can be easily checked from the expression of the Finsler metric \eqref{eq:F_p_wind} that its fundamental tensor $ g^F $ satisfies
\begin{equation*}
\begin{split}
g^F_v(v,u) = \ & \frac{1}{2}\frac{\partial}{\partial\delta}F(v+\delta u)^2|_{\delta=0} = F(v)\frac{\partial}{\partial\delta}F(v+\delta u)|_{\delta=0}= \\
= \ & \frac{\alpha(v)^2}{\left[ \alpha(v)^2\frac{a(1-\varepsilon^2)}{\alpha(v)-\varepsilon \ \omega(v)}+h(\alpha(v)+\beta(v)) \right]^3} \\
& \left\lbrace 2\langle \df\hat z(v),\df\hat z(u) \rangle \left[\alpha(v)^2\frac{a(1-\varepsilon^2)}{\alpha(v)-\varepsilon \ \omega(v)}+h(\alpha(v)+\beta(v))\right] \right. \\
& + \frac{a(1-\varepsilon^2)}{[\alpha(v)-\varepsilon \ \omega(v)]^2}[ \varepsilon \ \alpha(v)^2(2\langle \df\hat z(v),\df\hat z(u) \rangle \omega(v)-\alpha(v)^2\omega(u)) \\
& -\alpha(v)^3\langle \df\hat z(v),\df\hat z(u) \rangle ] \left. - h \ \alpha(v)^2\left( \frac{\langle \df\hat z(v),\df\hat z(u) \rangle}{\alpha(v)}+\beta(u)\right) \right\rbrace.
\end{split}
\end{equation*}
Therefore, the second orthogonality condition $ \partial_tf\bot_F\partial_sf $ becomes equivalent to
\begin{equation*}
\begin{split}
0 = \ & \frac{a(1-\varepsilon^2)}{[\alpha(\partial_tf)-\varepsilon \ \omega(\partial_tf)]^2} [ \varepsilon (2\langle \df\hat z(\partial_tf),\df\hat z(\partial_sf) \rangle \omega(\partial_tf) \\
& -\alpha(\partial_tf)^2\omega(\partial_sf)) - \alpha(\partial_tf)\langle \df\hat z(\partial_tf),\df\hat z(\partial_sf) \rangle ] + 2\frac{\langle \df\hat z(\partial_tf),\df\hat z(\partial_sf) \rangle}{\alpha(\partial_tf)^2} \\
& \left[\alpha(\partial_tf)^2\frac{a(1-\varepsilon^2)}{\alpha(\partial_tf)-\varepsilon \ \omega(\partial_tf)}+h(\alpha(\partial_tf)+\beta(\partial_tf)) \right] \\
& - h \left( \frac{\langle \df\hat z(\partial_tf),\df\hat z(\partial_sf) \rangle}{\alpha(\partial_tf)}+\beta(\partial_sf)\right),
\end{split}
\end{equation*}
which can be reduced to the second equation in \eqref{eq:pde} by taking into account that $ F(\partial_tf) = 1 $.

Equivalently, the firemap is also given by the geodesic equations for $ G $ parametrized by time, the general expression \eqref{eq:ode} derived in \cite[\S~4.2]{JPS}.
\end{proof}

\begin{rem}
Analytic solutions of the equation systems \eqref{eq:pde} and \eqref{eq:ode} can be obtained in some non-trivial situations. For example, Richards found explicit integral solutions to his equations under the very restricted and special condition that the fuel and meteorological conditions are time-only dependent, i.e., for each fixed time the Randers metric is a norm (see \cite[Eqs. (21) and (22)]{R2}). It can be checked that these solutions satisfy \eqref{eq:ode} (using the elliptical approximation with $ h \equiv 0 $). One straightforward way to prove this is to realize that, due to the spatial independence of the metric ($ F_{(t,p)}=F_t $), any constant spatial vector field $ v $ is Killing. So, if $ \rho $ is the (affine) parameter that makes $ \hat f(t(\rho),s_0) $ be a geodesic of $ G $, for any fixed $ s_0 \in S_0 $, then
\begin{equation*}
g^{G}_{\partial_\rho\hat f(t(\rho),s_0)}(\partial_\rho\hat f(t(\rho),s_0),v) = \partial_\rho t(\rho) g^{G}_{\partial_t\hat f(t,s_0)}(\partial_t\hat f(t,s_0),v) = \text{constant}.
\end{equation*}
Rewriting in terms of $ F_t $, with the choice $ v=\partial_s\hat f(0,s_0) = (0,\partial_s f(0,s_0)) $ and taking into account the initial conditions:
\begin{equation*}
g^{F_t}_{\partial_t f(t,s_0)}(\partial_t f(t,s_0),\partial_sf(0,s_0)) = 0, \quad \forall t \in [0,\infty), s_0 \in S_0.
\end{equation*}
This equation, together with $ F_t(\partial_tf)=1 $ ($ \partial_t\hat f $ is lightlike), enables one to obtain Richards solutions directly.
\end{rem}


\section{Examples}
\label{sec:examples}
Let us present now some simple examples to see the wildfire spread given by this model in certain situations and compare the contributions of its different elements. 

\subsection{Slope vs wind}
\label{subsec:slope_wind}
Typically in fire growth models, the effect of the slope is not treated as carefully as that of the wind. Indeed, the slope contribution is usually simplified and integrated within the overall wind effect on the fire spread. In fact, it is normally assumed that both phenomena (either combined or separate) generate an elliptical burn pattern. For instance, in FARSITE and Prometheus the slope acts as an additional wind vector that adds up to the original one, generating a ``virtual wind'' that accounts for both effects and is the one used to construct the ellipse that provides the propagation of the fire at each point (see \cite{F,TBWTA}).

However, due to the obvious differences in the physical nature of both phenomena, slope and wind are expected to contribute to the fire spread in qualitative distinct ways. This is the case in the model we have presented: the slope produces a (reverse) Matsumoto-type effect, while the wind generates a pattern similar to a double semi-ellipse. The main difference is that the wind focuses the fire trajectories much more in the direction it blows than the slope does in the steepest direction. As a consequence, a wind-driven wildfire tends to present a narrower pattern than a slope-driven one (compare the firefronts in Fig.~\ref{fig:wind_slope}). Nevertheless, the slope topography in a valley (i.e., when the slope is non-constant) can also focus the fire trajectories in dramatic ways, as evidenced by real fires and even by experiments conducted in laboratories (see, e.g., \cite{VRAAR}).



\begin{figure}
\centering
\subfigure[Slope-driven wildfire.]{\label{fig:slope_driven}\includegraphics[width=0.49\textwidth]{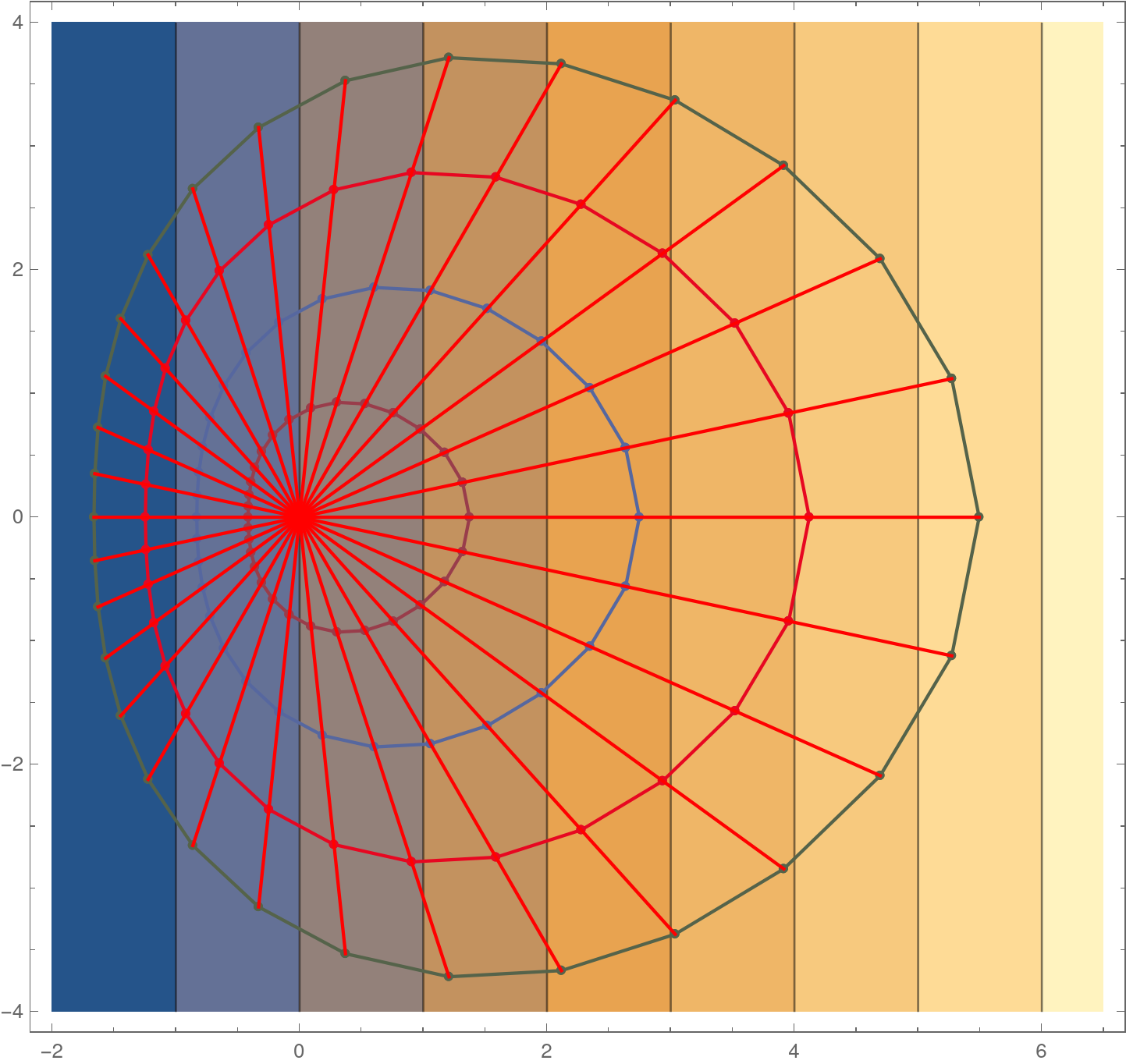}}
\subfigure[Wind-driven wildfire.]{\label{fig:wind_driven}\includegraphics[width=0.49\textwidth]{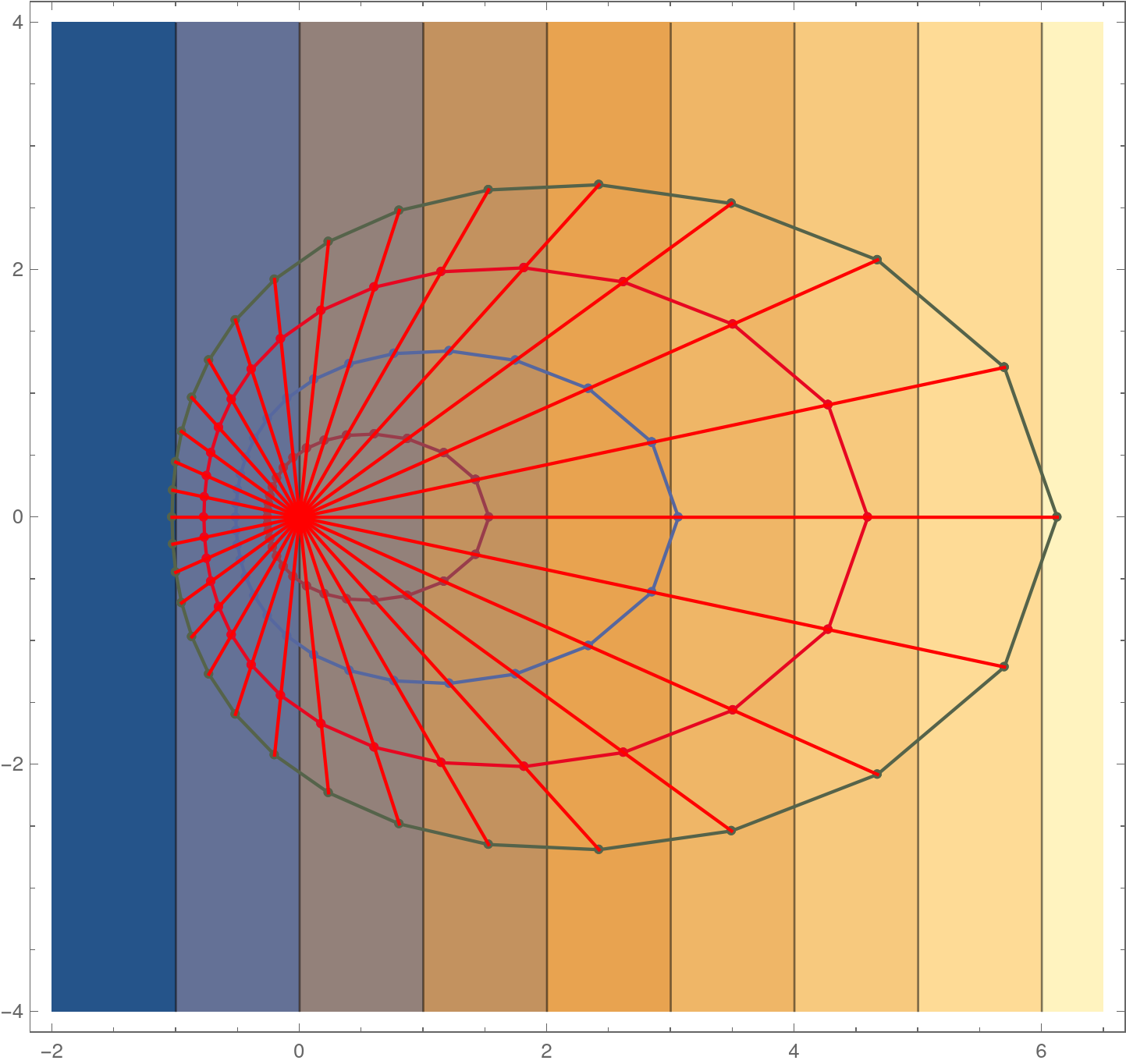}}
\caption{Comparison between the slope and wind effects on the wildfire propagation. On the left, the fire growth is governed by the contribution of the slope ($ h $ is greater than $ a $). On the right, the wind dominates ($ a $ is greater than $ h $) and the propagation becomes more focused downwind. In both examples the trajectories are straight lines because the respective metrics are constant. The firefronts join points at the same time and appear as piecewise linear; this is obviously an approximation: the more trajectories are computed, the more accurately we can reconstruct the firefront. The slope is represented by (vertical) contour lines and hypsometric tints between them.  Data: $ S_0=(0,0) $, $ z(x,y)=x/2 $, $ a=1 $ (left) or $ 3 $ (right), $ h=3 $ (left) or $ 1 $ (right), $ \varepsilon=0.8 $, $ \tilde\phi=0 $ and $ \Delta t=1 $.}
\label{fig:wind_slope}
\end{figure}

\subsection{Detection of cut points and crossovers}
\label{subsec:cut_points}
When a spacetime trajectory meets its cut point $ (t_0,p_0) \in M $, it no longer minimizes the propagation time after $ t_0 $ (or, equivalently, it no longer remains in $ \partial J^+(B_0) $) and thus, it must not be taken into account to compute the firefront beyond this time. Nevertheless, the firemap can still be calculated through the ODE system \eqref{eq:ode} at any $ t \geq 0 $ and the firefront will be given by those trajectories which have not arrived yet at their cut points. This is a major advantage of the ODE's approach, since the computation of each trajectory is independent and those reaching a cut point can be removed with no harm to the overall computation. When solving the PDE system, on the other hand, the firefront must be corrected each time there is a crossover (see Appendix~\ref{appendix1}), making the process much more expensive computationally speaking.

There are two types of cut points: the intersection of two spacetime trajectories and the focal points of the initial firefront (this is proven in Prop.~\ref{prop:appendix}). Both types are important when facing a wildfire: non-focal cut points can leave behind regions completely surrounded by the fire, while focal points mark regions where several fire trajectories come together, raising the heat intensity. So, the identification of these points becomes essential.

In practice though, when solving the ODE system \eqref{eq:ode} we can only compute a finite number of geodesics and the precise location of the cut points may become very difficult to pinpoint. However, it is not difficult to visualize the first intersection of two spatial trajectories. In this case, one of the following two situations will occur:
\begin{itemize}
\item One of the trajectories arrives strictly earlier than the other (thus, the corresponding spacetime trajectories do not meet). In this case, only the first-arriving trajectory must be taken into account for the firefront, while the other has already passed its cut point and should be discarded, see Fig.~\ref{fig:cut_points}.

\item Both spatial trajectories arrive at the same time at the intersection point, in which case it becomes the cut point of both curves (providing they have not intersected yet with any other trajectory), see Fig.~\ref{fig:cut_points_sym}.
\end{itemize}

As can be seen in the figures, the visualization of the firefront at regular time intervals allows one to find these points, which can appear in very general situations. Indeed, whenever there is a wide region where the fire speed is smaller, the firefront will go around it producing a cut point.


\begin{figure}
\centering
\subfigure[Aerial view on $ N \subset \R^2 $.]{\label{fig:cut_points_2d}\includegraphics[width=0.49\textwidth]{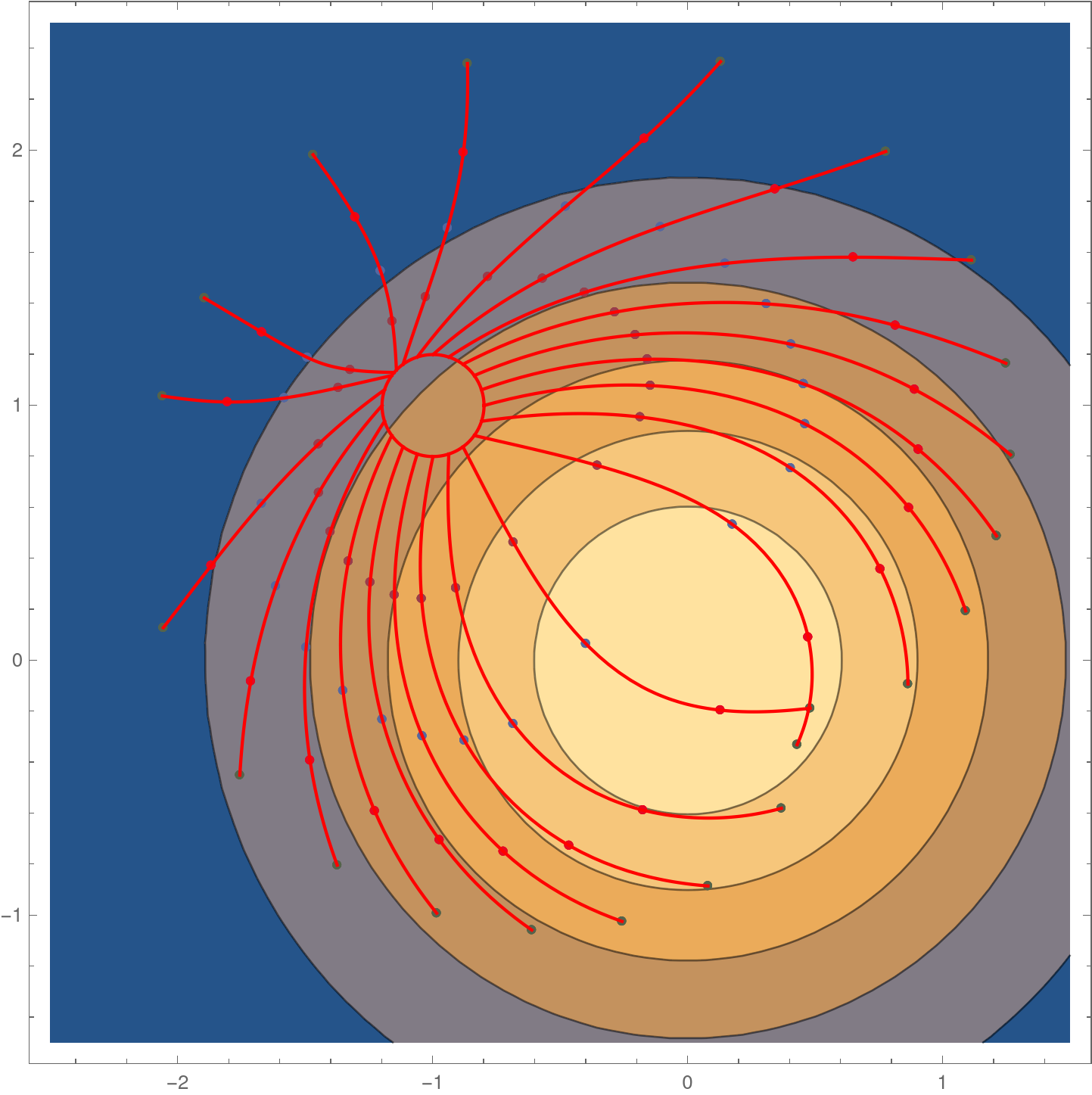}}
\subfigure[Actual 3D-propagation on $ \hat N \subset \R^3 $.]{\label{fig:cut_points_3d}\includegraphics[width=0.49\textwidth]{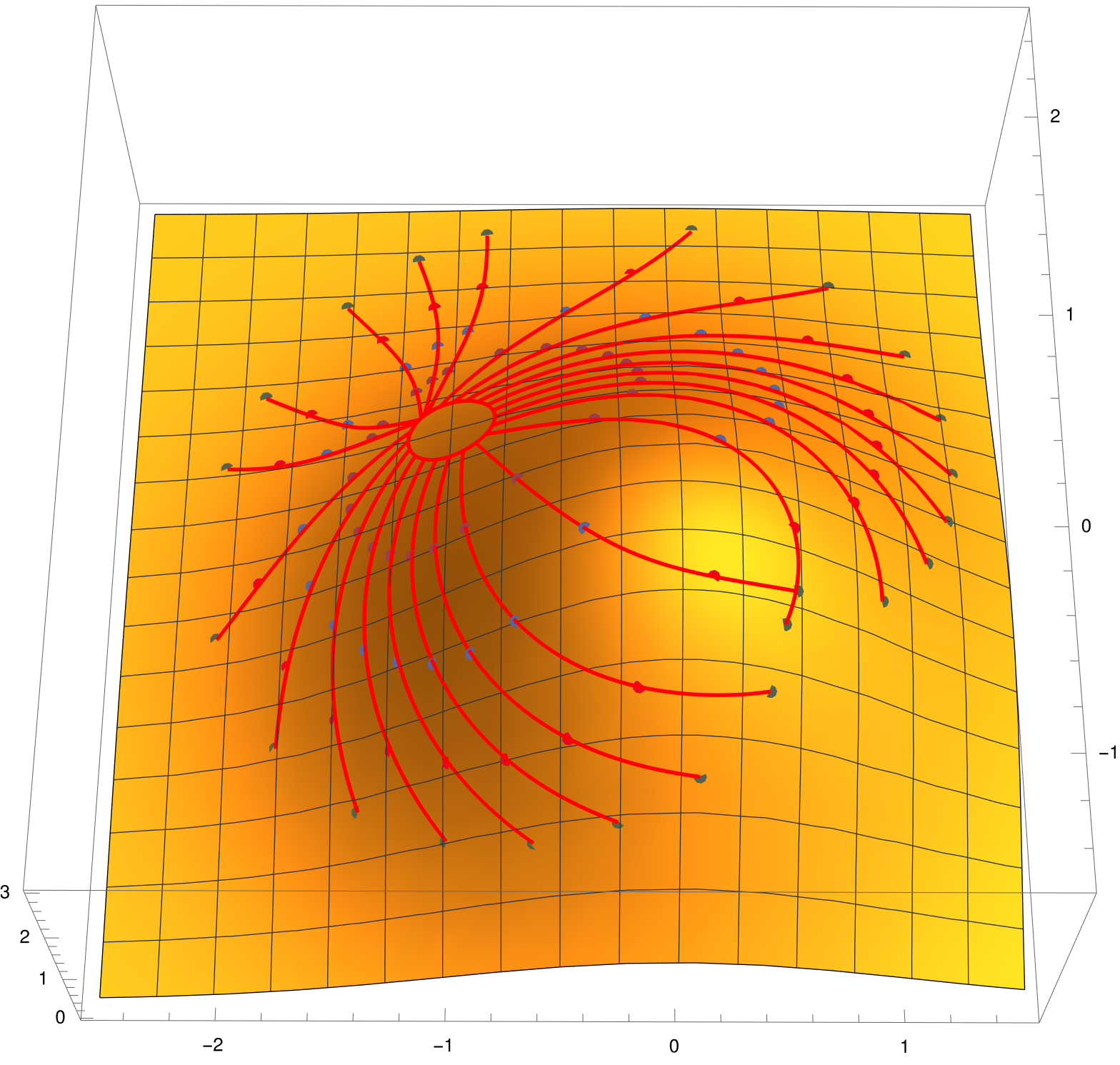}}
\caption{Situation when two spatial trajectories of the wildfire meet at different times. The one that arrives earlier (in this case, the curve that comes from above) keeps providing the firefront after the encounter. The one arriving later, however, will only pass through already ignited points, i.e., another curve from $ S_0 $ will arrive earlier at every point beyond the intersection. Data: $ S_0: [0,2\pi] \ni s\mapsto (0.2\cos s-1,0.2\sin s+1) $, $ z(x,y)=3\exp(-x^2/2-y^2/2) $, $ a=h=1 $, $ \varepsilon=0.4 $, $ \tilde\phi=0 $ and $ \Delta t=1.05 $.}
\label{fig:cut_points}
\end{figure}

\begin{figure}
\centering
\subfigure{\includegraphics[width=0.49\textwidth]{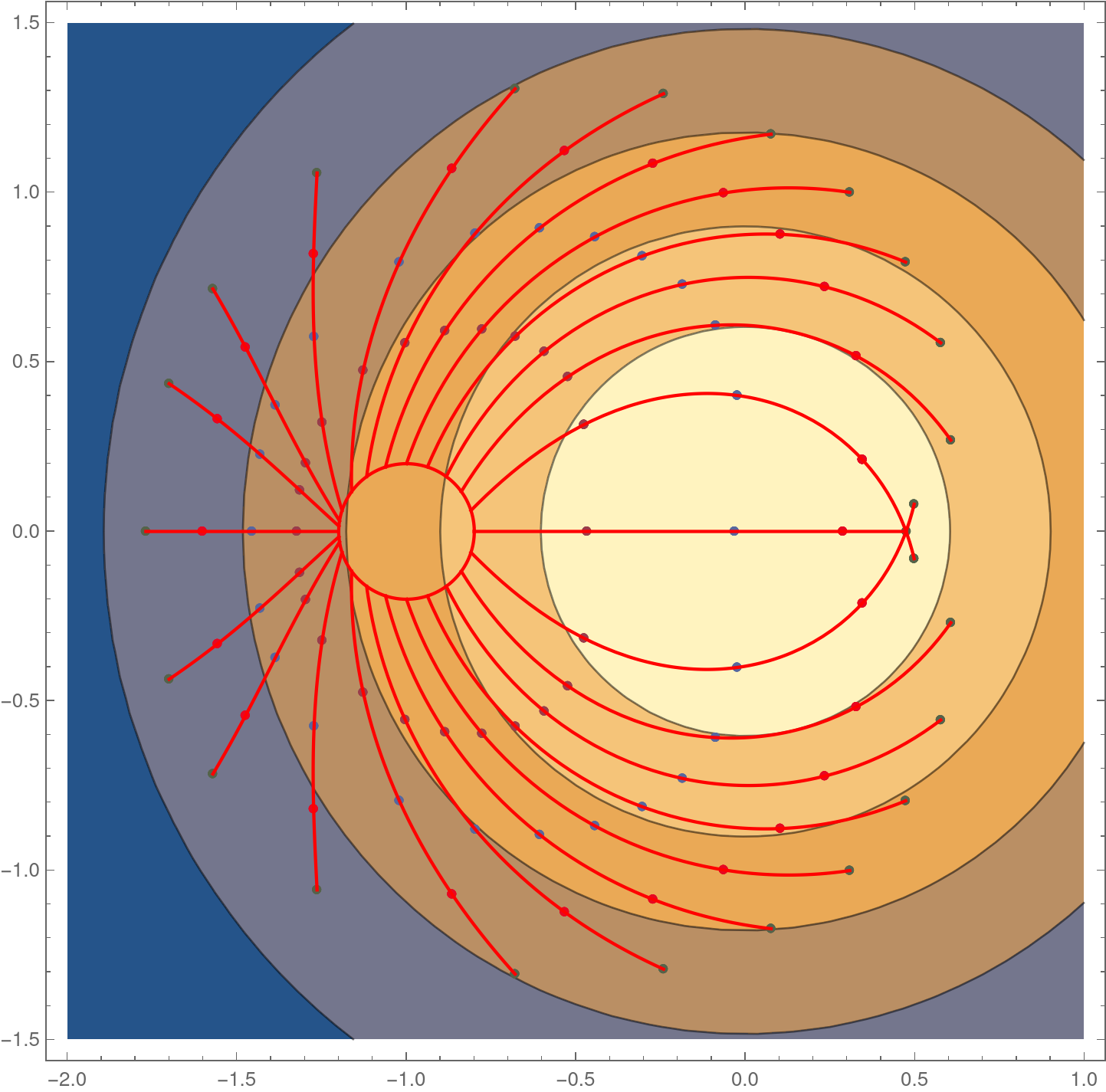}}
\subfigure{\includegraphics[width=0.49\textwidth]{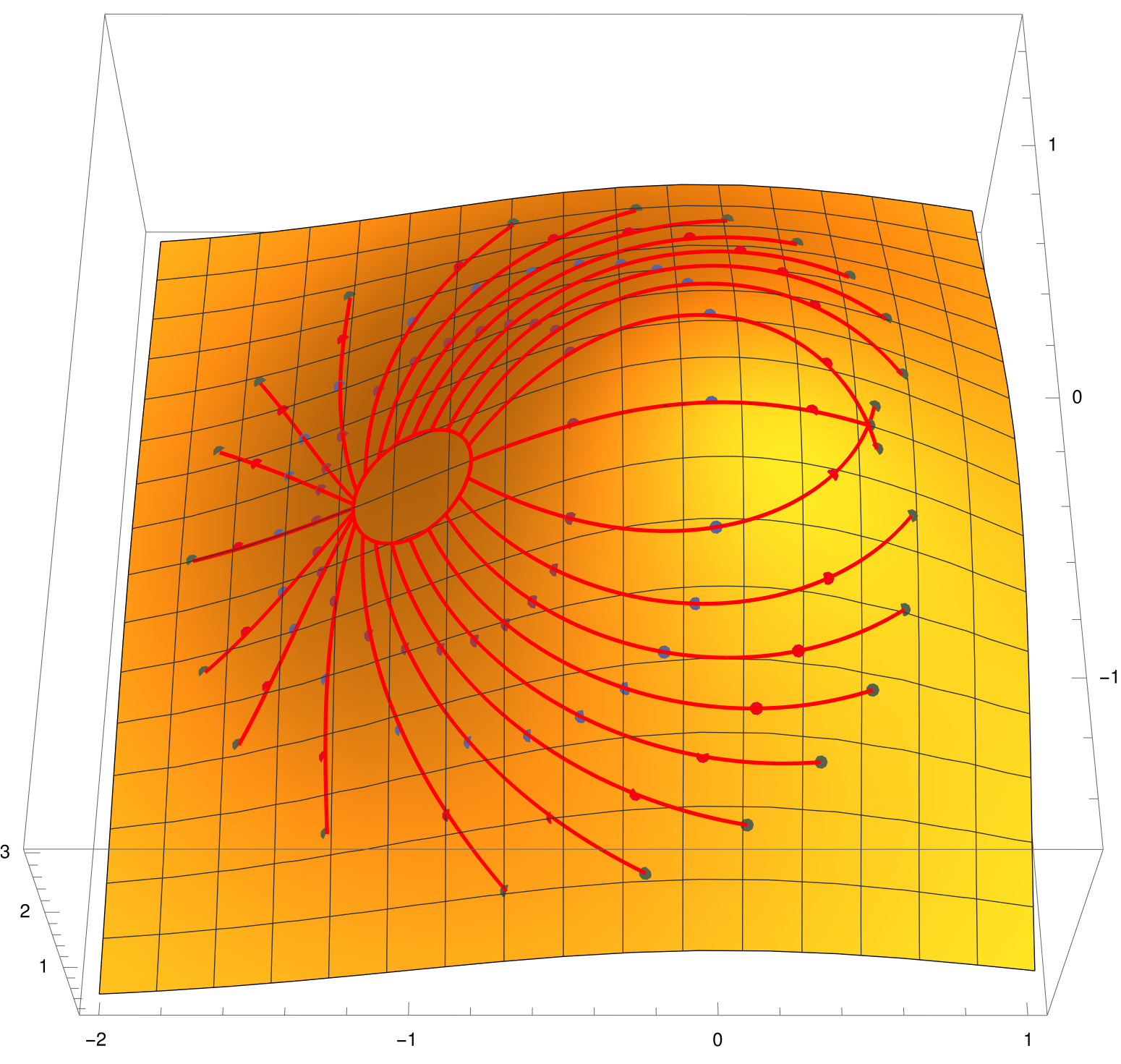}}
\caption{Wildfire with a clear symmetry in its propagation. Among the three intersecting trajectories, the two bordering the top of the mountain meet exactly at the same time, while the one going straight through the top arrives later. The intersection point becomes then the cut point of the former two curves, but not of the latter. 
Data: $ S_0: [0,2\pi] \ni s\mapsto (0.2\cos s-1,0.2\sin s) $, $ z(x,y)=3\exp(-x^2/2-y^2/2) $, $ a=h=1 $, $ \varepsilon=\tilde\phi=0 $ and $ \Delta t=0.86 $.}
\label{fig:cut_points_sym}
\end{figure}

\subsection{Flexibility: realistic wildfires}
\label{subsec:real_wildfires}
In general, realistic cases might feature a mixture of all previous effects plus more subtle ones that we have not even taken into account, therefore becoming extremely difficult to model. Although considering all the possible effects that may occur in a real wildfire is beyond the scope of this work, the model presented here is flexible enough to serve as an approximation to a great variety of situations, and also simple enough to be computable in real time. For example, Fig.~\ref{fig:complex} shows a wildfire where the wind changes over time and the fuel conditions vary from one point to another, and Fig.~\ref{fig:complex2} depicts a case with a more complicated topography.

\begin{figure}
\centering
\subfigure{\label{fig:complex_2d}\includegraphics[width=0.49\textwidth]{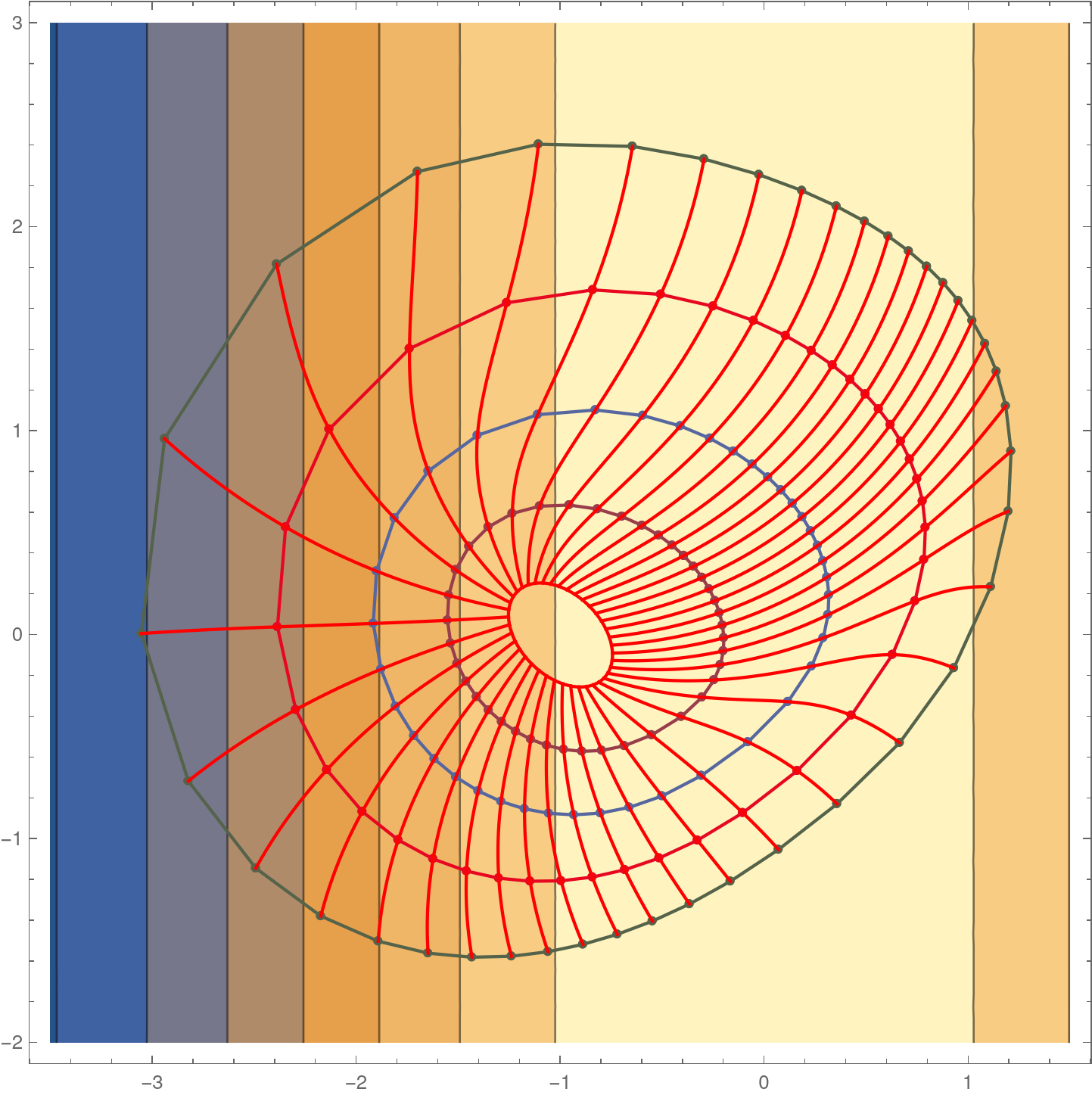}}
\subfigure{\label{fig:complex_3d}\includegraphics[width=0.49\textwidth]{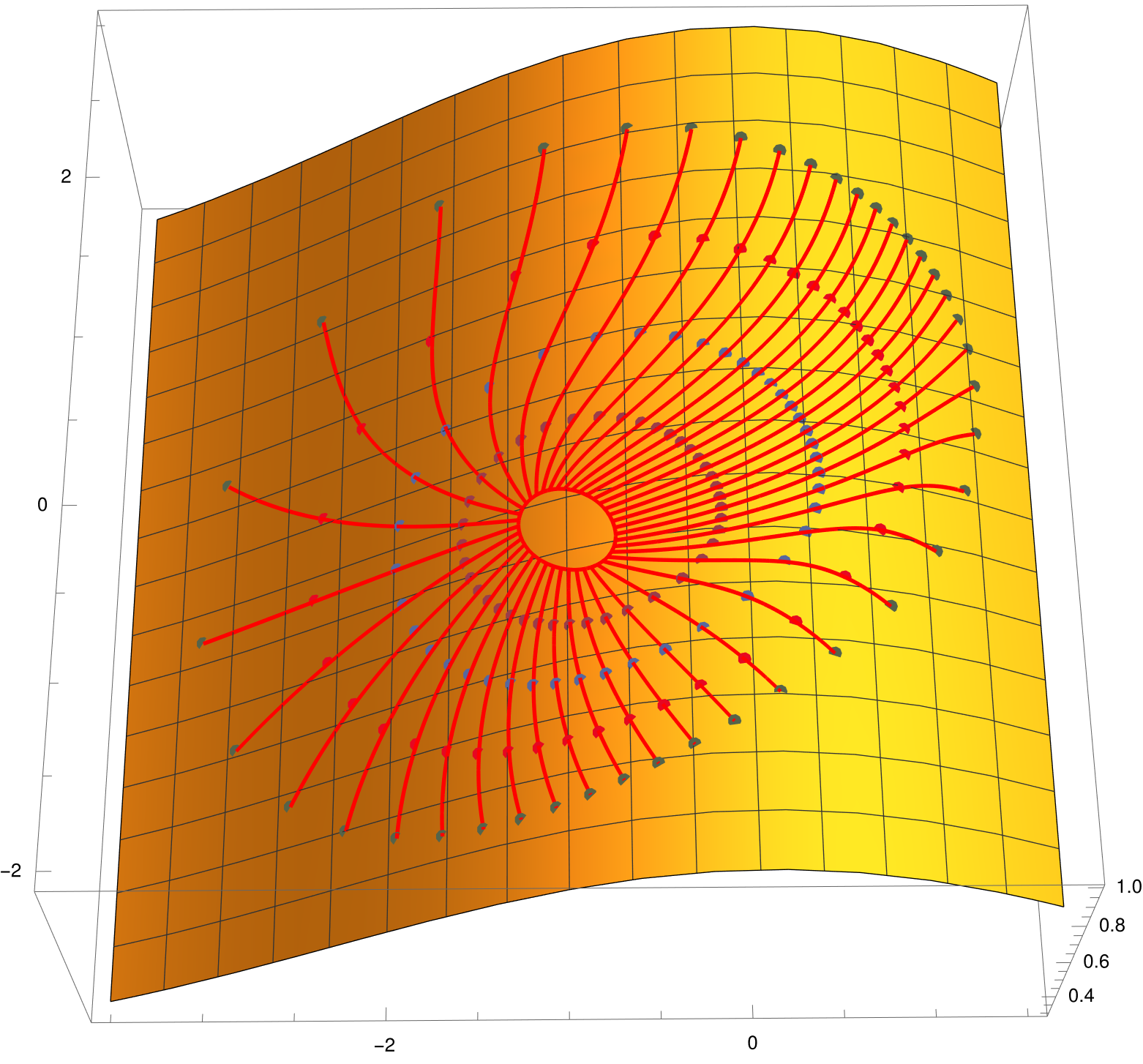}}
\caption{Example where both the wind and the fuel conditions do not remain constant. On the one hand, the wind becomes stronger and varies its direction over time ($ a $ and $ \tilde\phi $ increases). On the other hand, $ h $ decreases with the height of the mountain, representing that the vegetation becomes sparser at the top. Data: $ S_0: [0,2\pi] \ni s\mapsto (0.2\cos(s)\cos(\pi/4)-0.3\sin(s)\sin(\pi/4)-1,0.2\cos(s)\sin(\pi/4)+0.3\sin(s)\cos(\pi/4)) $, $ z(x,y)=\exp(-x^2/10) $, $ a=1+t $, $ h=1+x^2/2 $, $ \varepsilon=0.8 $, $ \tilde\phi=2t $ and $ \Delta t=0.7 $.}
\label{fig:complex}
\end{figure}

\begin{figure}
\centering
\subfigure{\label{fig:complex2_2d}\includegraphics[width=0.49\textwidth]{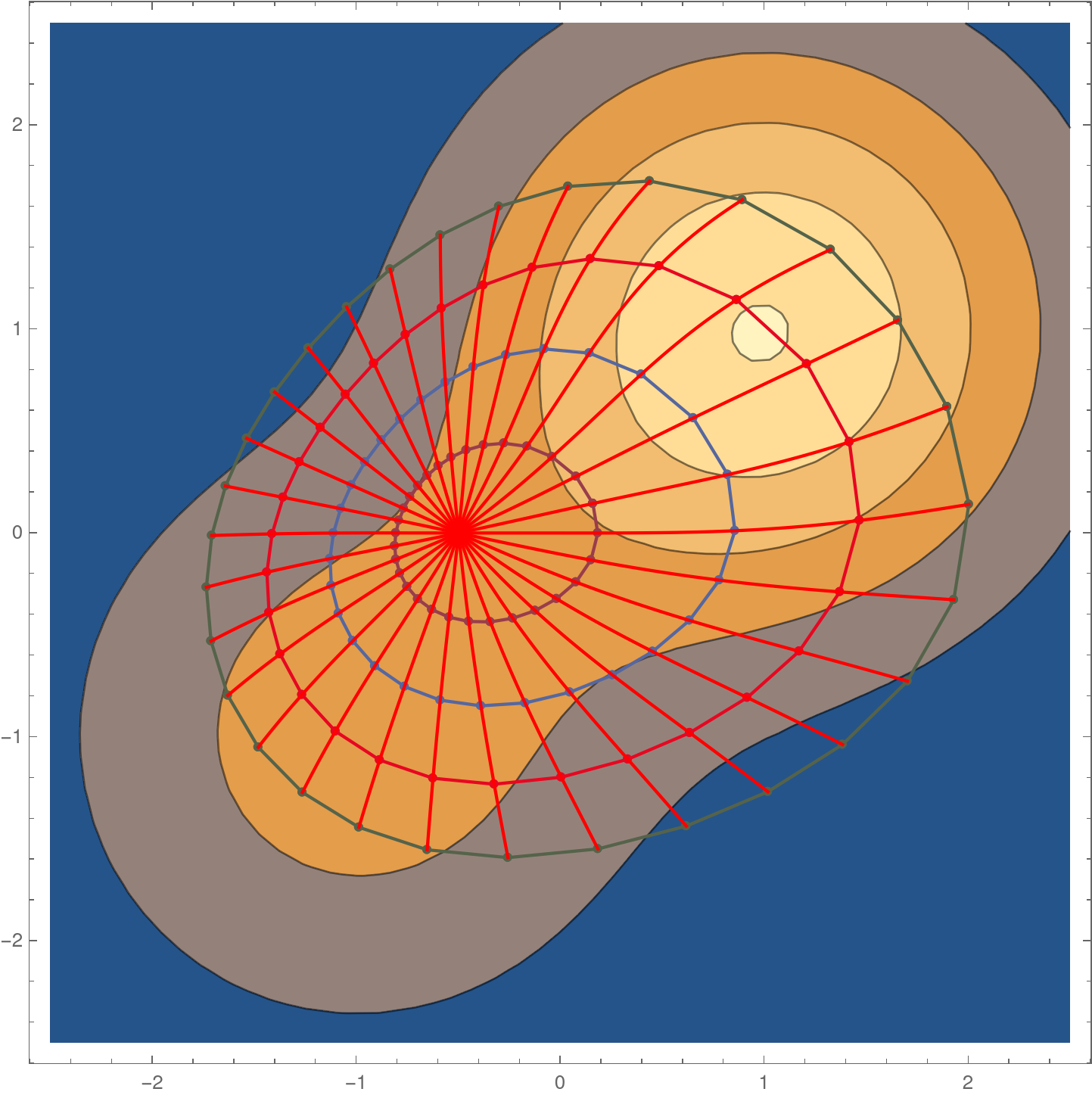}}
\subfigure{\label{fig:complex2_3d}\includegraphics[width=0.49\textwidth]{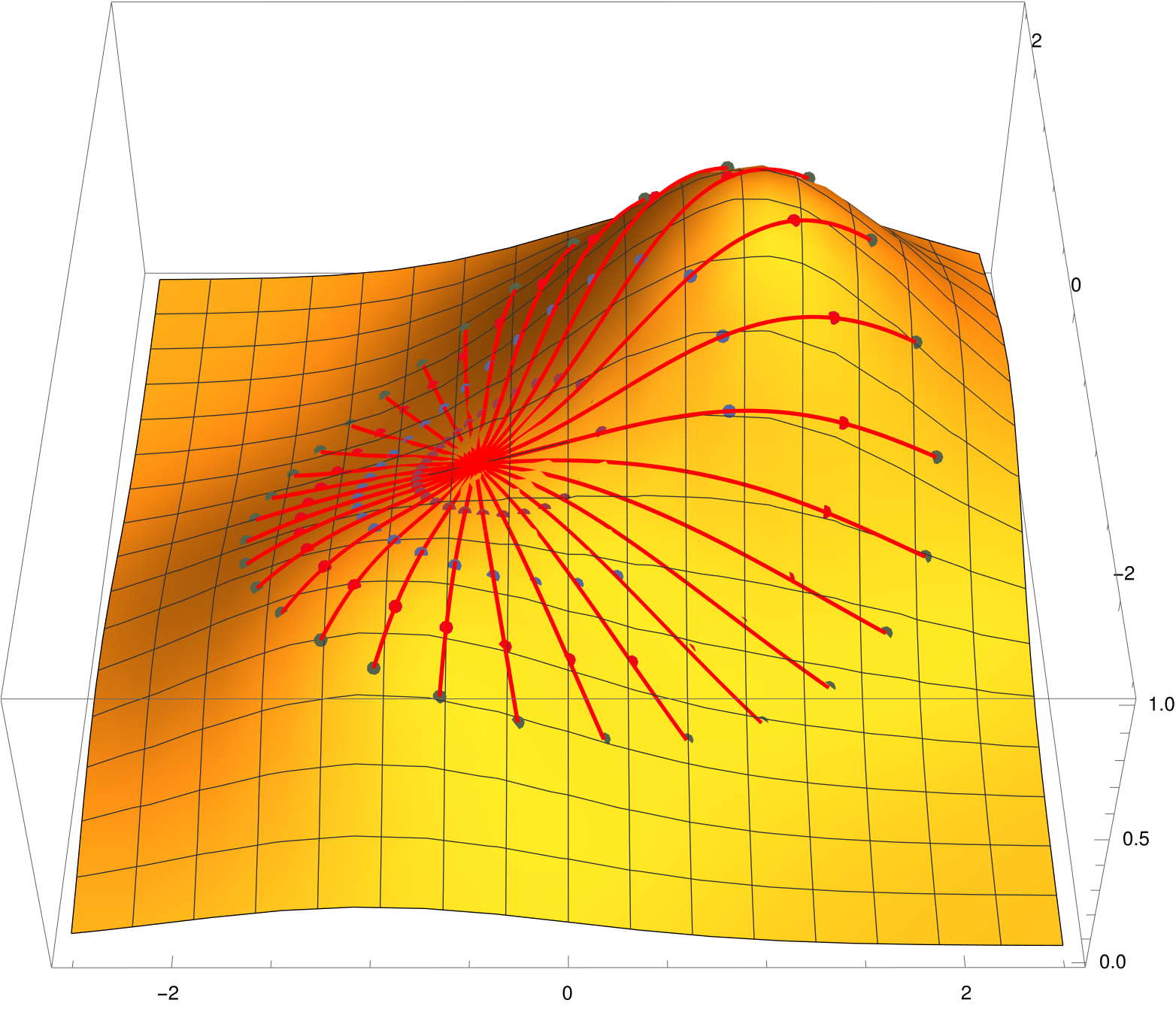}}
\caption{Example where the fire spreads over a more complex surface, with the wind blowing eastwards. As expected, the fastest trajectories are those heading northeast, i.e., downwind and upwards. Data: $ S_0=(-0.5,0) $, $ z(x,y)=\exp(-(x-1)^2/2-(y-1)^2/2)+1/2\exp(-(x+1)^2/2-(y+1)^2/2) $, $ a=h=1 $, $ \varepsilon=0.6 $, $ \tilde\phi=0 $ and $ \Delta t=1 $.}
\label{fig:complex2}
\end{figure}

\appendix

\section{Further issues}\label{appendix}
Here we will study in more depth two relevant technical issues. First, the role of the cut function and the different types of cut points. And secondly, the strong convexity hypothesis implicit in the Finslerian approach (condition (c) in Def.~\ref{def:finsler}(i)), 
as our model can be adapted 
when this hypothesis is not imposed a priori.
\subsection{Cut points}\label{appendix1}
Let us analyze the cut function introduced in \eqref{eq:cut}. Consider the spacetime trajectory $ \hat \gamma:t \rightarrow \hat f(t,s_0) $. As the wildfire is limited to a bounded (compact) region,\footnote{Technically, the required hypothesis is that the spacetime $ (M,G) $ is globally hyperbolic.} this  implies that a cut point $ \hat f(t_0,s_0)=(t_0,p_0) $ will only appear when either a second spacetime minimizing trajectory arrives at $(t_0,p_0)$, or there are spacetime trajectories starting close to the direction $\partial_t\hat f(0,s_0)$ and arriving at $\hat f(t,s_0)$ for $t (>t_0)$ close to $t_0$. More precisely:

\begin{prop}\label{prop:appendix}
Let $ (t_0,p_0)\in M $ be the cut point of $ \hat\gamma:t \mapsto \hat f(t,s_0) $. Then, either (a)
$ (t_0,p_0) $ is the first intersection point of $ \hat\gamma $ with another spacetime trajectory of the fire, or
(b) $ (t_0,p_0) $ is the first focal point of $ S_0 $ along $ \hat\gamma $
(it is possible for both conditions to hold simultaneously).
\end{prop}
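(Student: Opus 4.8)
The plan is to reduce this statement to a classical dichotomy in Lorentz(-Finsler) causality theory: for a globally hyperbolic Finsler spacetime, a point where a causal geodesic stops being maximizing (i.e., stops lying on $\partial J^+$) is either a point where two distinct maximizing causal geodesics from $S_0$ meet, or the first focal point of $S_0$ along the geodesic (or both). I would first recall that, by the spacetime setup of \S\ref{ss_spacetime_viewpoint}, the spacetime trajectory $\hat\gamma$ is a lightlike geodesic of $(M,G=dt^2-F^2)$ emanating $G$-orthogonally from $S_0$, and that $\partial J^+(B_0)$ is generated exactly by such geodesics up to their cut points (this is the content of the results quoted from \cite{JPS}, together with global hyperbolicity). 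The cut instant $c(s_0)=t_0$ is, by \eqref{eq:cut}, the last time $\hat\gamma$ lies in $\partial J^+(B_0)$, so the core of the argument is to characterize why $\hat\gamma$ leaves $\partial J^+(B_0)$ precisely at $t_0$.

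The key steps, in order, would be: (1) \emph{Existence of a competitor at $t_0$.} Using global hyperbolicity and the fact that $S_0$ is compact, I would show that for $t$ slightly larger than $t_0$ the point $\hat f(t,s_0)\notin\partial J^+(B_0)$ means it lies in the interior $I^+(B_0)$, so there is a timelike curve from $B_0$ to it; by a limiting/compactness argument (Arzel\`a--Ascoli on causal curves in a globally hyperbolic spacetime), letting $t\downarrow t_0$ produces a causal geodesic $\hat\sigma$ from $S_0$ to $(t_0,p_0)$ that is maximizing, hence lightlike and $G$-orthogonal to $S_0$ at its start. (2) \emph{The dichotomy.} Either $\hat\sigma\neq\hat\gamma$ as geodesics, which is case (a) — and I would check that $(t_0,p_0)$ is then the \emph{first} such intersection, since before $t_0$ the trajectory $\hat\gamma$ is still in $\partial J^+(B_0)$ and two distinct generators of $\partial J^+$ meeting would already force a cut there. (3) \emph{The focal case.} If instead $\hat\sigma=\hat\gamma$, then the failure of maximization past $t_0$ must come from the presence of a focal point of $S_0$ along $\hat\gamma$ at or before $t_0$; by the Finslerian analogue of the focal-point comparison theory (a geodesic orthogonal to a spacelike hypersurface fails to be maximizing strictly after its first focal point, and is maximizing strictly before it), the first focal point occurs exactly at $t_0$, giving case (b). The parenthetical "both can hold" is immediate since nothing in the argument excludes a competitor geodesic arriving exactly at a focal time.

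The main obstacle I expect is making step (1)--(3) rigorous in the \emph{Lorentz--Finsler} rather than Lorentzian setting: one must invoke the Finsler versions of the limit curve theorem, of the equivalence $\partial J^+ = \{$endpoints of maximizing null geodesics from the boundary$\}$ under global hyperbolicity, and of the focal-point index theory for hypersurface-orthogonal null geodesics. I would lean on the references \cite{JPS,JS20} (and whatever causality background they cite) to import these facts rather than reprove them, so that the proof here is essentially the reduction plus the bookkeeping that "first" in \eqref{eq:cut} matches "first" in statements (a) and (b). A secondary, milder technical point is confirming that the initial $G$-orthogonality to $S_0$ is preserved under the limiting argument of step (1), i.e., that the competitor $\hat\sigma$ genuinely emanates orthogonally from $S_0$ and is therefore one of the firemap trajectories $\hat f(\cdot,s)$; this follows from the first-variation formula for arc length together with maximality of $\hat\sigma$, but it is worth stating explicitly.
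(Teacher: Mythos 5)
Your proposal is correct and follows essentially the same route as the paper's proof: a competitor causal geodesic from $S_0$ reaching $(t_0,p_0)$ is produced by a compactness/limit argument under global hyperbolicity (the paper normalizes the initial velocities over the compact unit bundle of $S_0$ and invokes \cite[Thm. 6.9]{AJ} to get a lightlike, $G$-orthogonal, outward-pointing limit), then one splits according to whether this limit coincides with $\hat\gamma$, using the concatenation/push-up argument for the ``first intersection'' claim and the degeneracy of the normal exponential map (your ``focal-point comparison'' step) in the coincident case. The only differences are which standard causality facts are cited rather than reproved, so no further comparison is needed.
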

\begin{proof}
If $ \hat\gamma(t_0)=(t_0,p_0) $ is the cut point of $ \hat\gamma $, by definition $ \hat\gamma(t_0) \in \partial J^+(B_0) $ and $ \hat\gamma(t_0+\varepsilon) \notin \partial J^+(B_0) $ for all $ \varepsilon > 0 $. So, given that our spacetime $ (M,G) $ is globally hyperbolic, we can construct a sequence of timelike geodesics $ \{\hat\varphi_\varepsilon\} $, each $ \hat\varphi_\varepsilon\ $ departing orthogonally from $ S_0 $ and arriving at $ \hat\gamma(t_0+\varepsilon) $. When normalized with respect to any Riemannian metric $ g $ on $ M $, the initial velocities of $ \{\hat\varphi_\varepsilon\} $ lie in the $ g $-unit tangent bundle of $ S_0 $, which is compact (due to the compactness of $ S_0 $). Thus, they converge to a causal velocity whose corresponding causal geodesic $ \hat\varphi $ arrives at $ (t_0,p_0)\in \partial J^+(B_0) $. By \cite[Thm. 6.9]{AJ}, $ \hat\varphi $ is a lightlike geodesic $ G $-orthogonal to $ S_0 $ with its inital velocity pointing outwards, i.e., it is a spacetime trajectory of the fire. There are two possibilities:
\begin{itemize}
\item If $ \hat\gamma \not= \hat\varphi $, then $ (t_0,p_0) $ is the intersection point of two spacetime trajectories of the fire, necessarily the first one. Indeed, assume $ \hat\gamma $ intersects another spacetime trajectory $ \hat\alpha $ at $ \tau < t_0 $. Then we can define, for any $ \varepsilon > 0 $, the curve
\begin{equation*}
\hat \rho(t) :=
\left\lbrace 
\begin{array}{l}
\hat \alpha(t), \quad t\in[0,\tau], \\
\hat \gamma(t), \quad t\in[\tau,\tau+\varepsilon],
\end{array}
\right.
\end{equation*}
which is a non-smooth causal curve from $ S_0 $ and thus, by \cite[Prop. 6.5]{AJ}, there exists a timelike curve from $ \hat\rho(0)\in S_0 $ to $ \hat\gamma(\tau+\varepsilon) $ for every $ \varepsilon > 0 $. We conclude that $ \hat\gamma(t_0) \notin \partial J^+(B_0) $, which is a contradiction.

\item If $ \hat\gamma = \hat\varphi $, then we can take a normal geodesic $ \hat\varphi_{\varepsilon} $ which intersects $ \hat\gamma $ at $ \hat\gamma(t_0+\varepsilon) $ and whose initial velocity is arbitrarily close to that of $ \hat\gamma $, i.e., the normal exponential map is not a local diffeomorphism around $ t_0\hat\gamma'(0) $, which implies that it is singular there (namely, the differential map cannot be an isomorphism at $ t_0\hat\gamma'(0) $) and, therefore, $ \hat\gamma(t_0)=(t_0,p_0) $ is a focal point of $ S_0 $ along $ \hat\gamma $ (see \cite[Prop. 10.30]{O}, which can be directly translated to the Lorentz-Finsler case), necessarily the first one (as the normal exponential map in the direction $ \hat\gamma'(0) $ is a local diffeomorphism before $ t_0 $).
\end{itemize}
\end{proof}

According to this proposition, cut points are classified into two groups: first focal points, which are meeting (or ``almost meeting'') points for different geodesics starting arbitrarily close to a point of the firefront, and non-focal ones, where two first-arriving fire trajectories coming from different directions meet. Typically, a focal point will have non-focal cut point neighbors. Eventually, the projection of the latter on the space might yield a curve $ \gamma $ starting at the projection of the focal point that can be used as an ``escape curve" by firefighters (see for example the straight line going through the top of the mountain in Fig.~\ref{fig:cut_points_sym}, starting at the first crossing of nearby fire trajectories, which would correspond to the first focal point). Indeed, $ \gamma $ would provide the safest way out a priori; if it can be parametrized by time at the crossing instant, it would represent the last chance to escape.



The type of cut point also becomes relevant when solving the PDE system \eqref{eq:ort_cond_F}. Indeed, at focal points $ \partial_sf $ vanishes and the system cannot be solved,\footnote{Note in Thm.~\ref{th:pde_ode} that we can only guarantee that the PDE system determines the firemap until $ t=\epsilon $, while the ODE system provides the firemap for any $ t \geq 0 $.} so the firefront has to be redefined as a regular curve. Otherwise, the PDE system keeps providing a solution beyond non-focal cut points but corrections to the firefront are still needed in order to distinguish burned from unburned regions. In any case though, these adjustments require much more time and computing power than in the ODE's case, where it is only a matter of removing the trajectories that reach their cut point, regardless of the type.

\subsection{On the convexity of the indicatrix}\label{appendix2}
As discussed in Rem.~\ref{rem:convex}, there might be cases where $ \Sigma $ fails to be strongly convex and, therefore, no longer defines a Finsler metric (usually when the slope is too steep and condition \eqref{eq:cond_no_wind} is not satisfied). Observe that in this situation, the triangle inequality does not hold, allowing situations such as the one depicted in Fig.~\ref{fig:non_str_convex}.

\begin{figure}
\centering
\subfigure[Curve $ \Sigma$ ($Q(v)=0 $) and its convex hull $ CH(\Sigma) $.]{\label{fig:convex_env}\includegraphics[width=0.49\textwidth]{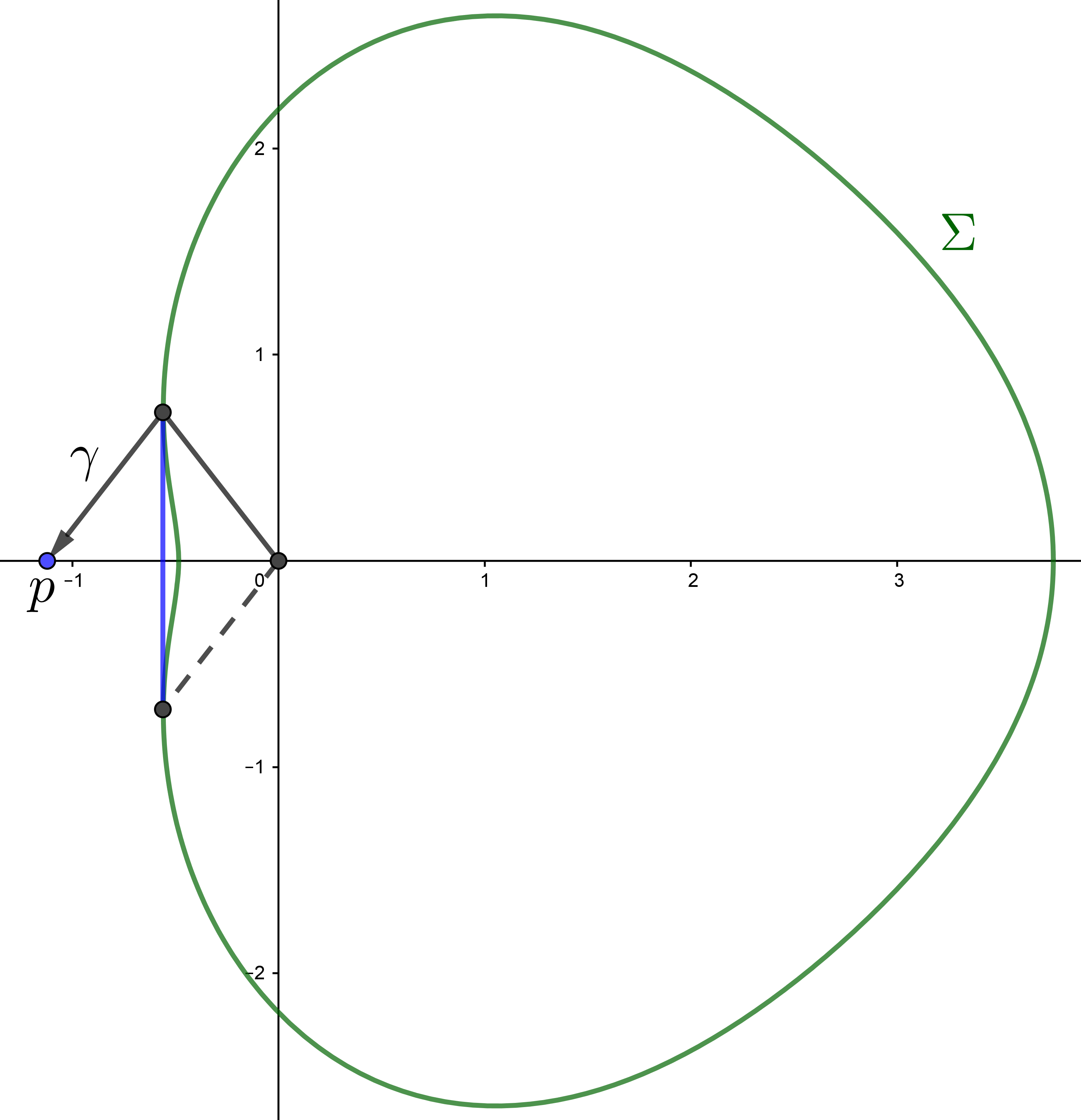}}
\subfigure[Detail of the part where $ \Sigma $ is not strongly convex.]{\label{fig:triangle_ineq}\includegraphics[width=0.49\textwidth]{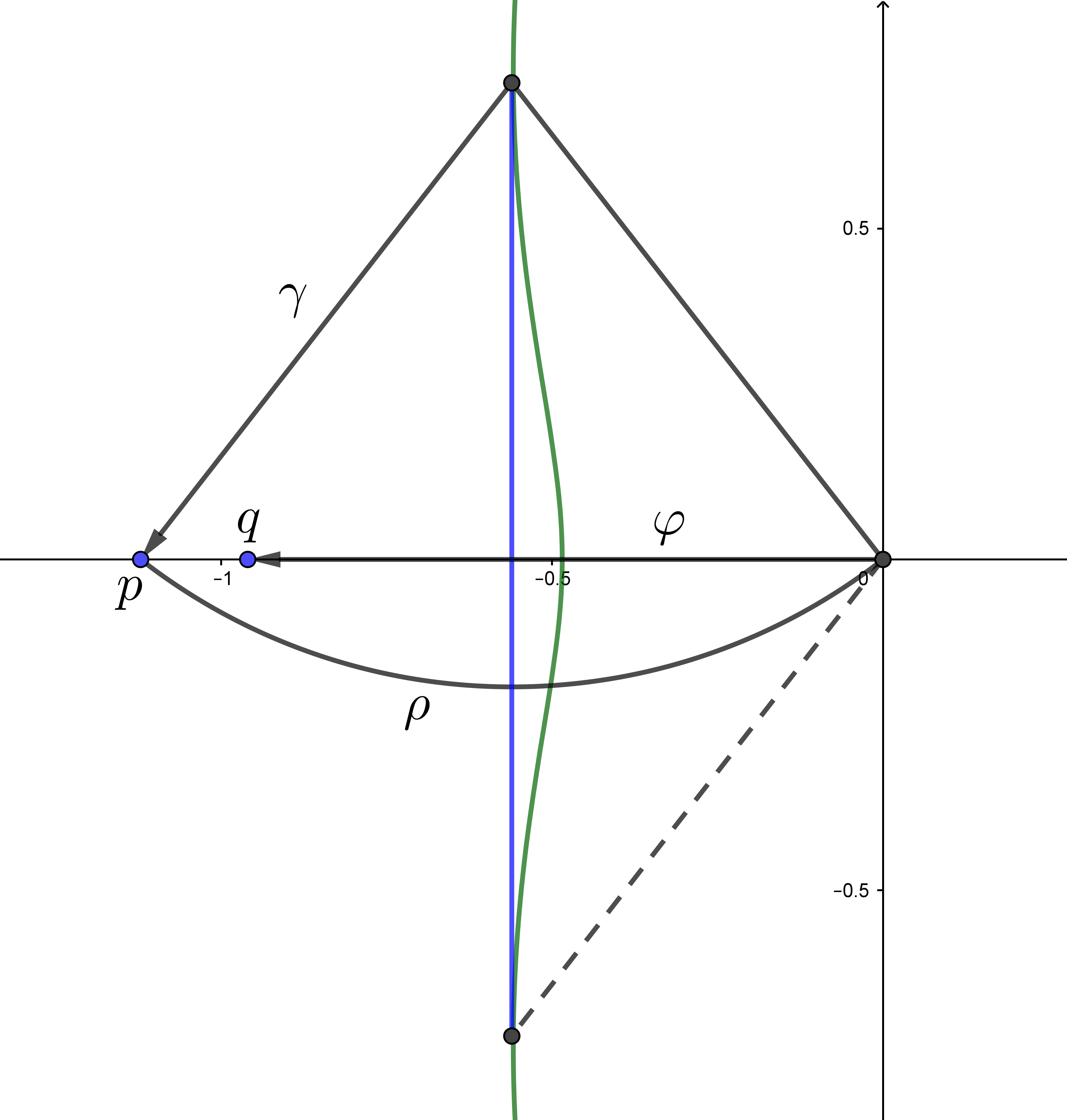}}
\caption{When $ \Sigma $ is not strongly convex, the triangle inequality no longer holds and the geodesics do not locally minimize the time. In the picture, $ \Sigma $ remains constant on $M$ and the broken line $\gamma$ spends two time units going from the origin to $ p $, while in that lapse, the straight segment $ \varphi $ only arrives at $ q $.  Taking the boundary of the convex hull of $ \Sigma $ as a new indicatrix, every curve whose velocity remains on the (non-strictly) convex part (the blue line in the figure) is a trajectory that reaches $ p $ in two time units, e.g., $ \rho $. In the end, the convex hull generates the same firefront as the original $ \Sigma $. Data: $ \partial_xz=1 $, $ \partial_yz = 0 $, $ a=1 $, $ h=2 $, $ \varepsilon=0.9 $ and $ \tilde{\phi}=0 $.}
\label{fig:non_str_convex}
\end{figure}

To pose this issue in simple terms, assume first that the velocity of the fire is given by some $ \Sigma $ independent of the point in $M=\R\times N$, but we do not assume the convexity of $\Sigma$. Then, after one time unit the firefront will not coincide with $ \Sigma $, but with the boundary of its convex hull, $ \partial CH(\Sigma) $. This shows that the firefront can be effectively computed by taking $ \partial CH(\Sigma) $ as the new indicatrix (recall the last part of Rem.~\ref{rem:infinit_fire}). 

In general, $ \partial CH(\Sigma) $ is only locally Lipschitz, although in our particular model it is smooth everywhere except at the tangency points of the additional segments (the blue one in Fig.~\ref{fig:convex_env}), where it is  $ C^1 $. This will define pointwise a (non-symmetric) norm, which  it is not exactly a Minkowski norm because it is not smooth everywhere and it may degenerate. Specifically, the fundamental tensor in \eqref{eq:fund_tensor} is not defined in the directions where $ \partial CH(\Sigma) $ is not $ C^2 $, and it is degenerate in the directions where $ \partial CH(\Sigma) $ lies on a straight line.

Working with this convex hull, one arrives at a possibly degenerate and non-smooth time-dependent Finsler metric $F^0$ and the corresponding Lorentz-Finsler one $G^0=dt^2-(F^0)^2$. This has clear advantages over working directly with $ \Sigma $: the picture of the wildfire trajectories is simplified, the regularity of the model is improved and it is consistent with the purpose of this work (and the applicability of the wildfire models in general), in which the interest is focused on the firefront rather than on the trajectories of the ``fire particles''.

However, even when working with $G^0$ the computation of the firefront is somewhat tricky, because the uniqueness theorem for its geodesics does not hold. Indeed, Fig.~\ref{fig:non_str_convex} shows a strong lack of uniqueness even in the case when $ F^0 $ is a norm (independent of $(t,p)$): every curve $ \gamma(t) $ with velocity in the non-strongly convex part of $ \partial CH(\Sigma) $ becomes a geodesic (up to reparametrizations) for $ F^0 $ and, thus, $ \hat\gamma(t)=(t,\gamma(t)) $ becomes a cone geodesic. So, there might be many possible trajectories of the fire from a given point.

From a practical viewpoint, we need to break this degeneracy in order to effectively compute the firefront. In some particular cases this can be carried out in a simple systematic way. For instance, if $ F^0 $ is independent of $t$ and Berwald (in particular, when $ F^0 $ is a norm),\footnote{A (non necessarily regular) Finsler metric is Berwald when the indicatrices at different points are affinely equivalent. There are several characterizations of this property, see \cite{Mat} and references therein.} there exists a Riemannian metric on $ N $ whose geodesics are also $ F^0 $-geodesics (see \cite[Thm. 1]{Mat}). In this case, the firefront can be obtained using this Riemannian metric. This might be extended to the time-dependent case when compatibility with a time-dependent Riemannian metric holds. In general though, a way to circumvent the problem would be to approximate $ \partial CH(\Sigma) $ by a strongly convex indicatrix. From a practical viewpoint, finding an appropriate accuracy in the approximation should not be a major problem. However, this raises theoretical questions on the convergence of different approximations to be studied elsewhere.

















\section*{Acknowledgments}
MAJ and EPR were partially supported by the projects PGC2018-097046-B-I00 and PID2021-124157NB-I00, funded by MCIN/AEI/10.13039/501100 011033/ "ERDF A way of making Europe", and also by Ayudas a proyectos para el desarrollo de investigaci\'{o}n cient\'{i}fica y t\'{e}cnica por grupos competitivos (Comunidad Aut\'{o}noma de la Regi\'{o}n de Murcia), included in the Programa Regional de Fomento de la Investigaci\'{o}n Cient\'{i}fica y T\'{e}cnica (Plan de Actuaci\'{o}n 2022) of the Fundaci\'{o}n S\'{e}neca-Agencia de Ciencia y Tecnolog\'{i}a de la Regi\'{o}n de Murcia, REF. 21899/PI/22. EPR and MS were partially supported by the project PID2020-116126GB-I00 funded by MCIN/AEI/10.13039/501100011033 and P20-01391 (PAIDI 2020, Junta de Andaluc\'{i}a), as well as the framework IMAG-Mar\'{i}a de Maeztu grant CEX 2020-001105-M/AEI/10.13039/501100011033. EPR was also supported by Ayudas para la Formaci\'{o}n de Profesorado Universitario (FPU) from the Spanish Government.

\end{document}